\begin{document}

\title[Hamiltonian systems]
{On a class of singular Hamiltonian Choquard-type elliptic systems with critical exponential growth}

\author[S. Deng]{Shengbing Deng}
\address{\noindent S. Deng-School of Mathematics and Statistics, Southwest University,
Chongqing 400715, People's Republic of China}\email{shbdeng@swu.edu.cn}

\author[J. Yu]{Junwei Yu}
\address{\noindent J. Yu-School of Mathematics and Statistics, Southwest University,
Chongqing 400715, People's Republic of China.}\email{JwYumaths@163.com}

\maketitle

\maketitle
\numberwithin{equation}{section}
\newtheorem{theorem}{Theorem}[section]
\newtheorem{lemma}[theorem]{Lemma}
\newtheorem{corollary}[theorem]{Corollary}
\newtheorem{definition}[theorem]{Definition}
\newtheorem{proposition}[theorem]{Proposition}
\newtheorem{remark}[theorem]{Remark}
\allowdisplaybreaks

\maketitle

\noindent {\bf Abstract}: In this paper, we study the following Hamiltonian Choquard-type elliptic systems involving singular weights
\begin{eqnarray*}
    \begin{aligned}\displaystyle
    \left\{ \arraycolsep=1.5pt
       \begin{array}{ll}
       -\Delta u + V(x)u = \Big(I_{\mu_{1}}\ast \frac{G(v)}{|x|^{\alpha}}\Big)\frac{g(v)}{|x|^{\alpha}} \ \ \ & \mbox{in} \ \mathbb{R}^{2},\\[2mm]
       -\Delta v + V(x)v = \Big(I_{\mu_{2}}\ast \frac{F(u)}{|x|^{\beta}}\Big)\frac{f(u)}{|x|^{\beta}} \ \ \ & \mbox{in} \ \mathbb{R}^{2},
        \end{array}
    \right.
    \end{aligned}
\end{eqnarray*}
where $\mu_{1},\mu_{2}\in(0,2)$,   $0<\alpha \leq \frac{\mu_{1}}{2}$, $0<\beta \leq \frac{\mu_{2}}{2}$, $V(x)$ is a continuous positive potential, $I_{\mu_{1}}$ and $I_{\mu_{2}}$ denote the Riesz potential, $\ast$ indicates the convolution operator, $F(s),G(s)$ are the primitive of $f(s),g(s)$ with $f(s),g(s)$ have exponential growth in $\mathbb{R}^{2}$. Using the linking theorem and variational methods, we establish the existence of solutions to the above problem.

\vspace{3mm} \noindent {\bf Keywords}: Hamiltonian system; Choquard nonlinearity; Critical exponential growth; Trudinger-Moser inequality; Singular weights.

\vspace{3mm}

\maketitle

\section{{\bfseries Introduction}}
This paper deals with the following class of Hamiltonian elliptic systems of the Choquard type involving singular weights
\begin{equation}\label{a}
    \begin{aligned}\displaystyle
    \left\{ \arraycolsep=1.5pt
       \begin{array}{ll}
       -\Delta u + V(x)u = \Big(I_{\mu_{1}}\ast \frac{G(v)}{|x|^{\alpha}}\Big)\frac{g(v)}{|x|^{\alpha}} \ \ \ & \mbox{in} \ \mathbb{R}^{2},\\[2mm]
       -\Delta v + V(x)v = \Big(I_{\mu_{2}}\ast \frac{F(u)}{|x|^{\beta}}\Big)\frac{f(u)}{|x|^{\beta}} \ \ \ & \mbox{in} \ \mathbb{R}^{2},
        \end{array}
    \right.
    \end{aligned}
\end{equation}
where $\mu_{1},\mu_{2}\in(0,2)$,   $0<\alpha \leq \frac{\mu_{1}}{2}$, $0<\beta \leq \frac{\mu_{2}}{2}$, $I_{\mu_{1}}$ and $I_{\mu_{2}}$ denote the Riesz potential defined by
\begin{eqnarray*}
    \begin{aligned}\displaystyle
    I_{\mu_{i}}(x)=\frac{\Gamma(\frac{2-\mu_{i}}{2})}{\Gamma(\frac{\mu_{i}}{2})2^{\mu_{i}}\pi|x|^{2-\mu_{i}}}:=\frac{A_{\mu_{i}}}{|x|^{2-\mu_{i}}}, \ \ x\in \mathbb{R}^{2}\backslash\{0\}, \ i=1,2,
    \end{aligned}
\end{eqnarray*}
where $\Gamma$ represents the gamma function, $\ast$ indicates the convolution operator, $F(t)=\int^{t}_{0}f(\tau)d\tau$ and $G(t)=\int^{t}_{0}g(\tau)d\tau$. The potential $V$ and the nonlinearities $f,g$ satisfy some suitable conditions that will be specified later.

We say that a function $h(s)$ has subcritical growth if for all $\gamma>0$ we have
\begin{equation}\label{ab}
    \begin{aligned}\displaystyle
    \lim\limits_{|s|\rightarrow +\infty}\frac{|h(s)|}{e^{\gamma s^{2}}}=0
    \end{aligned}
\end{equation}
and $h(s)$ has critical exponential growth if there exists $\gamma_{0}>0$ such that
\begin{equation}\label{ac}
    \begin{aligned}\displaystyle
    \lim\limits_{|s|\rightarrow +\infty}\frac{|h(s)|}{e^{\gamma s^{2}}}=0,\ \ \forall \ \gamma>\gamma_{0},\ \ and \ \ \lim\limits_{|s|\rightarrow +\infty}\frac{|h(s)|}{e^{\gamma s^{2}}}=+ \infty,\ \ \forall \ \gamma<\gamma_{0}.
    \end{aligned}
\end{equation}
This notion of criticality is motivated by Trudinger-Moser inequality.

In recent years, there are many works dedicated to study the following non-local elliptic equation
\begin{equation}\label{b}
    \begin{aligned}\displaystyle
    -\Delta u +u =(I_{\mu} \ast F(u))f(u) \ \ \mbox{in} \ \mathbb{R}^{2}.
    \end{aligned}
\end{equation}
This nonlocal equation arises in many interesting physical situations in quantum theory and plays an important role in describing the finite-range many-body interactions. Equation $(\ref{b})$ is well known in the literature as the Choquard equation and was first introduced in the pioneering work of Fr$\ddot{o}$hlich \cite{FR} for the modeling of quantum polaron. We mention that a great attention has been focused on the study of problem involving exponential growth nonlinearity. In \cite{BATTA}, Battaglia and Van Schaftingen proved the existence of a nontrivial ground state solution for equation $(\ref{b})$ when $F$ has exponential growth. Using the Moser type functions, Alves et al. \cite{ACTT} obtained the existence and concentration results for a class of nonlinear Choquard equations in the plane with exponential critical nonlinearity. Qin and Tang \cite{QT} improved and extended the related results to the strongly indefinite problems. For more classical results regarding Choquard equations, we refer to \cite{MVb} for a good survey.

On the other hand, we mention that the study of our problem is based on some interesting results of the following Hamiltonian elliptic system:
\begin{equation}\label{c}
    \begin{aligned}\displaystyle
    \left\{ \arraycolsep=1.5pt
       \begin{array}{ll}
       -\Delta u + V(x)u = f_{1}(x,v) \ \ \ & \mbox{in} \ \Omega,\\[2mm]
       -\Delta v + V(x)v = f_{2}(x,u) \ \ \ & \mbox{in} \ \Omega,
        \end{array}
    \right.
    \end{aligned}
\end{equation}
where $\Omega\subset \mathbb{R}^{2}$ is a bound domain. For $V=0$, de Figueiredo et al. \cite{FdR} studied the existence of solution for $(\ref{c})$ with Ambrosetti-Rabinowitz condition. Lam and Lu \cite{LLU} considered the existence of nontrivial nonnegative solutions for $(\ref{c})$ with subcritical and critical exponential growth without the $(AR)$ condition. When $V=1$ and $\Omega=\mathbb{R}^{2}$, the existence of ground state solutions of $(\ref{c})$ was introduced by de Figueiredo et al. \cite{FJZ}. For more classical results regarding Hamiltonian system, the reader may refer \cite{Bo} to the good survey.

When $f_{1}(x,v)=\frac{g(v)}{|x|^{\alpha}}$ and $f_{2}(x,u)=\frac{f(u)}{|x|^{\beta}}$, system $(\ref{c})$ becomes the singular Hamiltonian system:
\begin{equation}\label{d}
    \begin{aligned}\displaystyle
    \left\{ \arraycolsep=1.5pt
       \begin{array}{ll}
       -\Delta u + V(x)u = \frac{g(v)}{|x|^{\alpha}} \ \ v>0 \ \ \ & \mbox{in} \ \mathbb{R}^{2},\\[2mm]
       -\Delta v + V(x)v = \frac{f(u)}{|x|^{\beta}} \ \ u>0 \ \ \ & \mbox{in} \ \mathbb{R}^{2},
        \end{array}
    \right.
    \end{aligned}
\end{equation}
where $\alpha,\beta\in [0,2)$, $V: \mathbb{R}^{2}\rightarrow \mathbb{R}$ is continuous, positive and $ [V(x)]^{-1}\in L^{1}(\mathbb{R}^{2}) $. Souza \cite{Souza} established the existence of nontrivial solutions for the system $(\ref{d})$. More precisely, Souza showed the existence results of system $(\ref{d})$ with subcritical nonlinearities or critical nonlinearities$(\alpha=\beta)$ in the Trudinger-Moser sense. For other singular Hamiltonian systems, we can refer to \cite{Alb,Ben,Milan}.

When the nonlinear term is Choquard type, Maia and Miyagaki \cite{MMA} studied system $(\ref{c})$ of the type
\begin{equation}\label{e}
    \begin{aligned}\displaystyle
    \left\{ \arraycolsep=1.5pt
       \begin{array}{ll}
       -\Delta u + V(x)u = \Big(I_{\mu}\ast G(v)\Big)g(v) \ \ \ & \mbox{in} \ \mathbb{R}^{2},\\[2mm]
       -\Delta v + V(x)v = \Big(I_{\mu}\ast F(v)\Big)f(v) \ \ \ & \mbox{in} \ \mathbb{R}^{2},
        \end{array}
    \right.
    \end{aligned}
\end{equation}
where $I_{\mu}=\frac{1}{|x|^{\mu}}$ denotes the Riesz potential, the potential $V$ and the nonlinearities satisfy:

$(H_{0})$ $f,g: \mathbb{R} \rightarrow [0,+\infty)$ are continuous functions, and both have critical exponential growth;

$(H_{1})$ $\lim\limits_{s\rightarrow 0^{+}}f(s)s^{-1}=\lim\limits_{s\rightarrow 0^{+}}g(s)s^{-1}=0$;

$(H_{2})$ there exists $\theta > 2$, such that $0<\theta F(s)\leq sf(s)$ and $ 0< \theta G(s) \leq sg(s)$ for all $s>0$;

$(H_{3})$ there exist constants $s_{0},M_{0}>0$, such that $0<F(s)\leq M_{0}f(s)$ and $0< G(s) \leq M_{0}g(s)$ for all $s>s_{0}$;

$(H_{4})$ $f$ and $g$ are locally bounded;

$(H_{5})$ there exist constants $p>2$ and $C_{p}$ such that for all $s\geq0$,
\begin{eqnarray*}
    \begin{aligned}\displaystyle
    F(s),G(s)\geq2(\frac{C_{p}}{p})^{\frac{1}{2}}s^{p},
    \end{aligned}
\end{eqnarray*}
where
\begin{eqnarray*}
    \begin{aligned}\displaystyle
 C_{p}>(\frac{\overline{C}\alpha_{0}(p-1)}{4\pi p})^{\frac{p-1}{p}}S_{p}^{2}
    \end{aligned}
\end{eqnarray*}
and
\begin{eqnarray*}
    \begin{aligned}\displaystyle
    S_{p}=\inf\limits_{u\in H^{1}_{V}(\mathbb{R}^{2}),u\neq 0}\frac{\int_{\mathbb{R}^{2}}(|\nabla u|^{2}+V(x)u^{2})}{\Big(\int_{\mathbb{R}^{2}}\Big[\frac{1}{|x|^{\mu}}\ast u^{p}\Big]u^{p}\Big)^{\frac{1}{p}}},
    \end{aligned}
\end{eqnarray*}
where $\overline{C}>\frac{4}{4-\mu}$. By $(H_{5})$, Maia and Miyagaki \cite{MMA} obtained an upper bound
for the minimax level. Using approximation methods and linking arguments, they showed the existence of solutions for $(\ref{e})$ with critical exponential growth. Moreover, Maia and Miyagaki \cite{MMB} investigated the existence and nonexistence results of Hamiltonian Choquard-type elliptic systems with lower critical growth.

Recently, Tang et al. \cite{QTZA} obtained the existence of ground states for $(\ref{c})$ by the non-Nehari manifold method. They developed a direct method to deal with the difficulties aroused by the strongly indefinite features and the critical exponential growth.

The purpose of this work is to consider the existence of solutions for the singular Hamiltonian Choquard-type elliptic systems (\ref{a}).
Assume $V$ is a continuous function satisfying:

$(V_1)$ $V(x)\geq \inf_{\mathbb{R}^{2}}V(x):=V_{0}>0$;

$(V_2)$ The function $[V(x)]^{-1}$ belongs to $L^{1}(\mathbb{R}^{2})$.

Moreover, we assume that $f(s)=g(s)=0$ for $t\leq0$, and the nonlinearities $f,g$ satisfy the following conditions:

$(H_0^{\prime})$ $f,g: \mathbb{R} \rightarrow [0,+\infty)$ are continuous functions;

$(H_1^{\prime})$ $\lim\limits_{s\rightarrow 0^{+}}f(s)s^{-1}=\lim\limits_{s\rightarrow 0^{+}}g(s)s^{-1}=0$;

$(H_2^{\prime})$ there exists $\theta > 2$ such that for all $s>0$,
    \begin{eqnarray*}
    \begin{aligned}\displaystyle
    0<\theta F(s)\leq sf(s) \ \ and \ \ 0< \theta G(s) \leq sg(s);
    \end{aligned}
    \end{eqnarray*}

$(H_3^{\prime})$ there exist constants $s_{0},M_{0}>0$ such that for all $s>s_{0}$,
    \begin{eqnarray*}
    \begin{aligned}\displaystyle
    0<F(s)\leq M_{0}f(s) \ \ and \ \ 0< G(s) \leq M_{0}g(s);
    \end{aligned}
    \end{eqnarray*}

$(H_4^{\prime})$ $\liminf\limits_{t\rightarrow +\infty}\frac{tF(t)}{e^{\gamma_{0}t^{2}}}\geq \kappa $ and $\liminf\limits_{t\rightarrow +\infty}\frac{tG(t)}{e^{\gamma_{0}t^{2}}}\geq \kappa $ with $\kappa > \inf\limits_{\rho>0} \sqrt{\frac{(2-2\alpha+\mu_{1})\mu_{1}(1+\mu_{1})(2+\mu_{1})} {8\pi A_{\mu_{1}}\rho^{2-2\alpha+\mu_{1}}\gamma_{0}^{2}}e^{\frac{V_{\rho}\rho^{2}(2-2\alpha+\mu_{1})}{4}-1}}$, where $V_{\rho}:=\sup\limits_{|x|\leq\rho}V(x)$.

Our results for the Hamiltonian elliptic system $(\ref{a})$ read as follows.
\begin{theorem}\label{THa}
Suppose that $g$ has subcritical growth, $f$ has subcritical or critical growth, $(V_{1}),(V_{2})$ and $(H_{0}^{\prime})-(H_{3}^{\prime})$ hold, then system $(\ref{a})$ has a nontrivial positive weak solution.
\end{theorem}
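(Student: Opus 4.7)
The plan is to formulate (\ref{a}) as a critical point problem for a strongly indefinite $C^1$ functional and to apply an abstract linking theorem. I would work on the Hilbert product space $E:=E_V\times E_V$, where $E_V := \{u\in H^1(\mathbb{R}^2): \int V u^2 <\infty\}$ is equipped with $\|u\|^2 := \int(|\nabla u|^2 + Vu^2)$; conditions $(V_1)$--$(V_2)$ give the compact embedding $E_V \hookrightarrow L^q(\mathbb{R}^2)$ for every $q\in[2,\infty)$. The associated functional
\begin{equation*}
I(u,v)=\int_{\mathbb R^2}(\nabla u\cdot\nabla v+Vuv)\,dx-\frac12\int_{\mathbb R^2}\Bigl(I_{\mu_1}\ast\tfrac{G(v)}{|x|^\alpha}\Bigr)\tfrac{G(v)}{|x|^\alpha}\,dx-\frac12\int_{\mathbb R^2}\Bigl(I_{\mu_2}\ast\tfrac{F(u)}{|x|^\beta}\Bigr)\tfrac{F(u)}{|x|^\beta}\,dx
\end{equation*}
has critical points equal to weak solutions of (\ref{a}). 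The quadratic form is diagonalised by the splitting $E=E^+\oplus E^-$ with $E^\pm:=\{(w,\pm w):w\in E_V\}$, on which it is positive (resp.\ negative) definite and $B$-orthogonal.

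To check that $I$ is well defined and $C^1$, I would combine the Hardy--Littlewood--Sobolev inequality with a singular Trudinger--Moser (Adimurthi--Sandeep type) estimate. HLS yields $\bigl|\int(I_{\mu_1}\ast\tfrac{G(v)}{|x|^\alpha})\tfrac{G(v)}{|x|^\alpha}\bigr|\lesssim\|G(v)/|x|^\alpha\|_{4/(2+\mu_1)}^2$, and the constraint $\alpha\le\mu_1/2<1$ keeps the singular weight $|x|^{-4\alpha/(2+\mu_1)}$ in the range handled by the singular Moser inequality on $\mathbb R^2$, so that hypotheses $(H_0')$--$(H_1')$ together with the subcritical growth of $g$ and the (possibly critical) growth of $f$ suffice to bound both Choquard terms and give Fr\'echet differentiability.

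The linking geometry is next. Near the origin, $(H_1')$ and the Moser--HLS bounds give $\Phi(z)=o(\|z\|^2)$, hence $I\ge a>0$ on $\partial B_\rho\cap E^+$ for small $\rho$. Fixing $e\in E^+\setminus\{0\}$ and using the Ambrosetti--Rabinowitz hypothesis $(H_2')$, the negative definite quadratic part on $E^-$ combined with the super-quadratic growth of $F,G$ forces $I\le 0$ on the boundary of the linking box $Q:=\{z^-+te:z^-\in E^-,\ \|z^-\|\le R,\ 0\le t\le R\}$ for $R$ large. The abstract linking theorem for strongly indefinite functionals (Li--Szulkin / de Figueiredo--J--Zhang style) then produces a Cerami sequence $(z_n)=(u_n,v_n)$ at a positive level $c\in[a,\sup_Q I]$. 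Boundedness of $(z_n)$ follows by the standard AR trick, using $(H_3')$ and the Cerami relation tested against $z_n^+ - z_n^-$ to separate the two halves of the indefinite form.

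The main obstacle is compactness. Passing to a weak limit $(u,v)$, the compactness of $E_V\hookrightarrow L^q$ gives $u_n\to u$, $v_n\to v$ in every $L^q$ ($q\ge 2$) and a.e., and one must identify $(u,v)$ as a critical point. Because $g$ is subcritical, its exponential parameter can be chosen arbitrarily small, so after splitting the integral into a neighbourhood of the origin (where $\alpha<1$ keeps $|x|^{-2\alpha}$ locally integrable) and its complement, the $(G,g)$-Choquard term passes to the limit via Vitali/dominated convergence. The delicate piece is the $(F,f)$-term when $f$ is critical: here one must show that the Cerami level $c$ stays below the threshold implicit in the singular Trudinger--Moser inequality so that $\|u_n\|^2\gamma_0$ is bounded by a value yielding an $L^s$ bound on $e^{\gamma_0 u_n^2}/|x|^{s\beta}$ for some $s>1$. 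I plan to extract this bound by combining $(H_2')$--$(H_3')$ with the fact that the subcriticality of $g$ makes the corresponding contribution to $c$ arbitrarily small; this is the mechanism that allows the conclusion without invoking the explicit quantitative hypothesis $(H_4')$. Once $(u,v)$ is a critical point, the inequality $c\ge a>0$ gives nontriviality, and testing with $u^-,v^-$ combined with $f,g\ge 0$ (and $f(s)=g(s)=0$ for $s\le 0$) together with the strong maximum principle delivers $u,v>0$.
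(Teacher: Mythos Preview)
Your variational setup, the splitting $E=E^{+}\oplus E^{-}$, and the linking geometry are the same as the paper's. There are, however, two substantive points where your plan diverges from the paper, one methodological and one that is a genuine gap.

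\medskip
\textbf{Galerkin versus abstract linking.} The paper does not invoke an abstract strongly indefinite linking theorem. Because the required weak-to-weak upper semicontinuity is delicate with exponential nonlinearities and singular weights, the authors instead run a Galerkin scheme: they restrict $\Phi$ to finite-dimensional spaces $H_{n,y}=\mathbb{R}(y,y)\oplus E_n$, apply the classical linking theorem there, obtain exact critical points $z_n=(u_n,v_n)$ with $\Phi(z_n)=c_n\in[\sigma,R_1^{2}]$, and only then pass to the limit using density of $\bigcup_n E_n$. Your route may be viable, but it is not the paper's, and you would need to verify the hypotheses of whatever abstract theorem you quote.

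\medskip
\textbf{The gap: how subcriticality of $g$ is really used.} Your plan for the critical $f$ is to force the minimax level $c$ below the Trudinger--Moser threshold, arguing that ``the subcriticality of $g$ makes the corresponding contribution to $c$ arbitrarily small''. This is not correct: the level $c$ does not split into an $F$-part and a $G$-part, the $G$-term is not small, and without $(H_4')$ you have no quantitative control on $c$ at all. The paper neither proves nor needs any level estimate for Theorem~\ref{THa}.

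The actual mechanism is as follows. First, passage to the limit in $\Phi'$ (even for the critical $f$-term) uses only the uniform $L^{1}$ bounds of Lemma~\ref{LEcb}, namely
\[
\int_{\mathbb R^2}\Bigl(I_{\mu_2}\ast\frac{F(u_n)}{|x|^{\beta}}\Bigr)\frac{f(u_n)u_n}{|x|^{\beta}}\,dx\le C,
\]
together with a de Figueiredo--Miyagaki--Ruf type convergence lemma; no threshold on $c$ enters. Second, for nontriviality one argues by contradiction: if the weak limit were $(0,0)$, testing with $(0,u_n)$ gives
\[
\|u_n\|^{2}=\int_{\mathbb R^2}\Bigl(I_{\mu_1}\ast\frac{G(v_n)}{|x|^{\alpha}}\Bigr)\frac{g(v_n)\,u_n}{|x|^{\alpha}}\,dx .
\]
Because $g$ is subcritical, the exponential parameter in the estimate for $g$ can be taken arbitrarily small, so HLS plus H\"older plus the compact embedding $H^1_V\hookrightarrow L^q$ force the right-hand side to $0$ \emph{regardless of the size of $\|v_n\|$}. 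Thus $\|u_n\|\to 0$. Only now, with $\|u_n\|$ small, does the critical $f$-integral fall automatically into the subcritical Trudinger--Moser regime, and one concludes $c_n\to 0$, a contradiction. Subcriticality of $g$ is used to kill $\|u_n\|$, not to lower the level.

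\medskip
\textbf{A side remark on boundedness.} The boundedness of the (PS) sequence here is not the ``standard AR trick'' tested against $z_n^{+}-z_n^{-}$. The paper (Lemma~\ref{LEcb}) tests with $(u_n,v_n)$ to bound the nonlinear integrals, then separately with $(v_n,0)$ and $(0,u_n)$, and controls $\|u_n\|,\|v_n\|$ via the elementary inequality $st\le (e^{t^{2}}-1)+s(\log s)^{1/2}$ of Lemma~\ref{LEca} applied with $t=v_n/\|v_n\|$ and $s=f(u_n)/C_1$. You will need this or an equivalent device.
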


\begin{theorem}\label{THb}
Suppose that $\alpha=\beta$, $\mu_{1}=\mu_{2}$, $f,g$ both have critical exponential growth, $(V_{1}),(V_{2})$ and $(H_{0}^{\prime})-(H_{4}^{\prime})$ hold, then system $(\ref{a})$ has a nontrivial positive weak solution.
\end{theorem}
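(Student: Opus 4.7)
The plan is to apply a linking theorem of Benci-Rabinowitz type to the strongly indefinite Euler-Lagrange functional of (\ref{a}) on the product Hilbert space $E := H^1_V(\mathbb{R}^2) \times H^1_V(\mathbb{R}^2)$, where $H^1_V = \{u \in H^1(\mathbb{R}^2) : \int V(x)u^2\,dx < \infty\}$ carries the inner product induced by $(V_1)$. Hypotheses $(V_1)$--$(V_2)$ supply the compact embedding $H^1_V \hookrightarrow L^q(\mathbb{R}^2)$ for every $q \geq 2$, which is crucial for passage to the weak limit. Under $\alpha=\beta$, $\mu_1=\mu_2=\mu$, I would work with
\begin{eqnarray*}
I(u,v) = \int_{\mathbb{R}^2}\bigl(\nabla u\cdot\nabla v + V(x)uv\bigr)dx - \int_{\mathbb{R}^2}\Bigl(I_\mu \ast \frac{F(u)}{|x|^\alpha}\Bigr)\frac{G(v)}{|x|^\alpha}\,dx,
\end{eqnarray*}
whose quadratic part is diagonalized by the splitting $E^\pm = \{(u,\pm u) : u \in H^1_V\}$, giving $I(z) = \tfrac{1}{2}\|z^+\|^2 - \tfrac{1}{2}\|z^-\|^2 - \Psi(z)$ for $z=z^++z^-$.

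I would first verify the linking geometry. Local positivity $\inf_{\partial B_\rho \cap E^+} I \geq r > 0$ follows from $(H_1')$, $(H_2')$, the singular Adimurthi-Sandeep Trudinger-Moser inequality (available because $0 < \alpha \leq \mu/2 < 1$), and the Hardy-Littlewood-Sobolev inequality applied to $\Psi$, where the HLS exponent relation $1/s + 1/t = 1 + \mu/2$ is balanced against $L^q$-estimates for $F(u)/|x|^\alpha$ and $G(v)/|x|^\alpha$. For a suitable $\varphi \in E^+$, the linking region $Q = \{z^- + t\varphi : z^- \in E^-,\, 0 \leq t,\, \|z^-\| + t \leq R\}$ satisfies $I|_{\partial Q} \leq 0$ for $R$ large, thanks to the Ambrosetti-Rabinowitz condition $(H_2')$ with $\theta > 2$, which forces the nonlinearity to eventually dominate the indefinite quadratic part along rays in $E^- \oplus \mathbb{R}^+\varphi$.

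The decisive step, and the main obstacle, is to push the linking level
\begin{eqnarray*}
c := \inf_{\gamma\in\Gamma}\sup_{z\in Q} I(\gamma(z))
\end{eqnarray*}
strictly below the threshold at which the singular Trudinger-Moser embedding loses compactness. This is where $(H_4')$ enters: one tests with a truncated, $H^1_V$-normalized Moser function $\widetilde{M}_n$ supported in $B_\rho$ placed in $E^+$, and uses $tF(t),\, tG(t) \gtrsim \kappa\, e^{\gamma_0 t^2}$ for large $t$ to evaluate $\sup_{t\geq 0} I(t\widetilde{M}_n + z^-)$. The resulting upper bound involves precisely the factors $A_\mu$, $2-2\alpha+\mu$, $\rho^{2-2\alpha+\mu}$, $\gamma_0^2$, and the normalizing term $e^{V_\rho\rho^2(2-2\alpha+\mu)/4}$ appearing inside the infimum of $(H_4')$; the quantitative condition on $\kappa$ is exactly what makes $c$ fall below the critical threshold. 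Optimizing over $\rho$ then localizes the estimate.

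Boundedness of a Cerami sequence $(z_n)$ at level $c$ follows from $(H_2')$ by a standard duality argument between $E^+$ and $E^-$. Using the compact embedding $H^1_V \hookrightarrow L^q$ for $q \geq 2$, together with a Brezis-Lieb-type splitting for the singular Choquard term in the spirit of Alves-Cassani-Tarsi-Yang \cite{ACTT} (and the technical control of $F(u_n)/|x|^\alpha$ in $L^1$ granted by $(H_3')$), one identifies the weak limit $(u_0,v_0)$ as a critical point of $I$. The strict sub-threshold bound on $c$ precludes $(u_0,v_0)=0$; non-negativity via the convention $f=g=0$ on $(-\infty,0]$ combined with the strong maximum principle applied separately to each equation in (\ref{a}) upgrades this to $u_0,v_0 > 0$. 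The deepest difficulty throughout is calibrating the three simultaneous non-compactness mechanisms, namely the loss of compactness at infinity controlled by $(V_2)$, the Choquard nonlocality controlled by HLS, and the exponential criticality aggravated by the singular weight at the origin controlled by $(H_4')$.
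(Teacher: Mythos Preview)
Your proposal contains a genuine structural error: the Euler--Lagrange functional you wrote down is not the one associated to system~(\ref{a}). You work with
\[
I(u,v) = \langle u,v\rangle - \int_{\mathbb{R}^2}\Bigl(I_\mu \ast \frac{F(u)}{|x|^\alpha}\Bigr)\frac{G(v)}{|x|^\alpha}\,dx,
\]
but differentiating this in the $u$-direction (and using the symmetry of the convolution) yields $-\Delta v + Vv = \bigl(I_\mu\ast \tfrac{G(v)}{|x|^\alpha}\bigr)\tfrac{f(u)}{|x|^\alpha}$, whereas the second equation of~(\ref{a}) reads $-\Delta v + Vv = \bigl(I_\mu\ast \tfrac{F(u)}{|x|^\alpha}\bigr)\tfrac{f(u)}{|x|^\alpha}$; the convolution kernel is $F(u)$, not $G(v)$. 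The same mismatch occurs in the $v$-derivative. The system~(\ref{a}) is \emph{not} a Hamiltonian system for a mixed Choquard coupling: each equation carries a Choquard term in which the same primitive appears both inside and outside the convolution. The correct functional is
\[
\Phi(u,v)=\langle u,v\rangle-\frac{1}{2}\int_{\mathbb{R}^{2}}\Bigl(I_{\mu}\ast \frac{F(u)}{|x|^{\alpha}}\Bigr)\frac{F(u)}{|x|^{\alpha}}\,dx-\frac{1}{2}\int_{\mathbb{R}^{2}}\Bigl(I_{\mu}\ast \frac{G(v)}{|x|^{\alpha}}\Bigr)\frac{G(v)}{|x|^{\alpha}}\,dx,
\]
whose nonlinear part decouples in $u$ and $v$. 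This change is not cosmetic: several of your subsequent estimates (the linking geometry on $\partial Q$, the Moser-function test, the Brezis--Lieb splitting) rely on the specific form of the nonlinearity and must be redone for the separated quadratic Choquard terms.

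A second, more methodological point: you propose to apply a Benci--Rabinowitz linking theorem directly on the infinite-dimensional space $E$. The paper instead passes through a Galerkin approximation, restricting $\Phi$ to finite-dimensional subspaces $H_{n,y}=\mathbb{R}(y,y)\oplus E_n$ built from eigenfunctions of $-\Delta+V$, applying the standard linking theorem there to produce exact critical points $z_{n,y}$ with levels trapped in $[\sigma,\tfrac{(2-2\alpha+\mu_1)\pi}{\gamma_0}-\delta]$, and then taking $n\to\infty$. This sidesteps the upper-semicontinuity and $(PS)^*$-type hypotheses that abstract strongly-indefinite linking theorems would demand in the presence of critical exponential growth. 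Your direct route is not obviously wrong, but you would need to name and verify the precise abstract theorem you intend to invoke and check its compactness hypotheses against the exponential nonlinearity; the paper's finite-dimensional reduction avoids this issue entirely. The remaining ingredients you outline---the Moser-function level estimate driven by $(H_4')$, boundedness of the approximating sequence via $(H_2')$, and the two-case nontriviality argument---do match the paper's strategy once the functional is corrected.
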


\begin{remark}\rm
We notice that, there are some difficulties to study the existence of solutions for system $(\ref{a})$. The first one is that the energy functional associated with system $(\ref{a})$ is strongly indefinite and there is a lack of the compactness for the whole space case. Hence, we use a Galerkin approximation procedure to overcome the lack of compactness. The second one is the nonlinearities with critical exponent and singular weights, which also lead to the lack of compactness. Different from \cite{MMA}, we restore compactness by the properties of the exponential critical nonlinear term at infinity. Therefore, we introduce the following $(H_{4}^{\prime})$ condition instead of $(H_{5})$.Then using Trudinger-Moser inequality inequality and Doubly Weighted Hardy-Littlewood-Sobolev inequality, introduced  in Section $\ref{PFS}$ , we address this issue.
\end{remark}
The paper is organized as follows: In Section $\ref{PFS}$, the variational setting and some preliminary results are presented. In Section $\ref{STLEMMA}$, we introduce the geometry structure and give some properties related to system $(\ref{a})$. In Section $\ref{FIn}$, we give a Galerkin
approximation procedure of system $(\ref{a})$. In Section $\ref{Subp}$, using an approaching argument and variational methods we complete the proof of the subcritical case. Section $\ref{TECL}$ is devoted to give an estimate for the minimax level of the critical case. Finally in Section $\ref{PROOF}$, we complete the proof of critical case.

\section{{\bfseries Preliminaries and functional setting}}\label{PFS}
In this section, we give some preliminary results and outline the variational framework for $(\ref{a})$.
\begin{proposition}\label{PRa}
\cite{Ca}.$i)$ If $\gamma>0$ and $u\in H^{1}(\mathbb{R}^{2})$, then
\begin{eqnarray*}
    \begin{aligned}\displaystyle
    \int_{\mathbb{R}^{2}}\Big(e^{\gamma u^{2}-1}\Big)dx<\infty;
    \end{aligned}
\end{eqnarray*}
$ii)$ if $u\in H^{1}(\mathbb{R}^{2})$, $\|\nabla u\|^{2}_{2}\leq1$, $\|u\|_{2}\leq M<\infty$, and $\gamma<4\pi$, then there exists a constant $\mathcal{C}(M,\gamma)$, which depends only on $M$ and $\gamma$, such that
\begin{eqnarray*}
    \begin{aligned}\displaystyle
    \int_{\mathbb{R}^{2}}\Big(e^{\gamma u^{2}-1}\Big)dx<\mathcal{C}(M,\gamma).
    \end{aligned}
\end{eqnarray*}
\end{proposition}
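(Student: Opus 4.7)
The plan is to establish both parts by splitting $\mathbb{R}^2$ according to the size of $|u|$ and reducing to the classical bounded-domain Moser--Trudinger inequality on the set where $|u|$ is large. Fix $R > 0$. On $\{|u|\leq R\}$ the elementary bound $e^t - 1 \leq t\,e^R$ for $t \in [0,R]$ yields
\[
\int_{\{|u|\leq R\}}\bigl(e^{\gamma u^2}-1\bigr)\,dx \leq \gamma\,e^{\gamma R^2}\,\|u\|_2^2,
\]
which is finite for every $u \in L^2(\mathbb{R}^2)$. The delicate region is $A_R := \{|u|>R\}$, which by Chebyshev satisfies $|A_R| \leq \|u\|_2^2/R^2 < \infty$.

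Introduce the truncation $v := (|u|-R)^+ \in H^1(\mathbb{R}^2)$; then $\mathrm{supp}(v)\subset\overline{A_R}$, $|\nabla v|\leq|\nabla u|$ a.e., and Young's inequality $2Rv \leq \varepsilon v^2 + R^2/\varepsilon$ gives, on $A_R$,
\[
e^{\gamma u^2} \leq e^{\gamma R^2(1+1/\varepsilon)}\, e^{\gamma(1+\varepsilon) v^2}
\]
for any $\varepsilon > 0$. To control the right-hand side, I would apply the classical Moser--Trudinger inequality on a set of finite measure via Schwarz symmetrization: the symmetric decreasing rearrangement $v^*$ lives in $H^1_0(B_r)$ with $\pi r^2 = |A_R|$, has the same distribution function, and by P\'olya--Szeg\H{o} satisfies $\|\nabla v^*\|_2 \leq \|\nabla v\|_2$. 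The classical Moser estimate then provides
\[
\int_{A_R}\bigl(e^{\gamma(1+\varepsilon) v^2}-1\bigr)\,dx \leq C\,|A_R|
\]
whenever $\gamma(1+\varepsilon)\|\nabla v\|_2^2 \leq 4\pi$.

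For (i), given arbitrary $\gamma > 0$ and $u\in H^1(\mathbb{R}^2)$, I take $\varepsilon = 1$ and choose $R$ so large that $\|\nabla v\|_2^2 \leq \int_{\{|u|>R\}}|\nabla u|^2\,dx < 4\pi/(2\gamma)$; absolute continuity of the integral makes this feasible, so the two regional bounds combine to give the asserted finiteness. For (ii) the strict inequality $\gamma < 4\pi$ lets me fix $\varepsilon > 0$ once and for all with $\gamma(1+\varepsilon)\leq 4\pi$; since $\|\nabla v\|_2 \leq \|\nabla u\|_2 \leq 1$, the Moser hypothesis holds uniformly for every admissible $u$, and choosing (say) $R = 1$ gives $|A_R|\leq M^2$. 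Collecting constants produces an estimate of the form
\[
\int_{\mathbb{R}^2}\bigl(e^{\gamma u^2}-1\bigr)\,dx \leq \gamma\, e^{\gamma}\, M^2 + C\,e^{\gamma(1+1/\varepsilon)}\,M^2 =:\mathcal{C}(M,\gamma),
\]
which depends only on $M$ and $\gamma$, as required.

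The main obstacle is the reduction of the whole-space problem to the classical bounded-domain Moser--Trudinger estimate; the symmetrization step, combined with the equimeasurability of $e^{\gamma v^2}-1$ and the P\'olya--Szeg\H{o} gradient inequality, is the technical pivot. A secondary point, decisive for (ii), is that $\gamma < 4\pi$ is strict: this is precisely the slack needed to absorb the cross-term $2Rv$ arising from the identity $|u| = v + R$ on $A_R$ into the factor $(1+\varepsilon)v^2$, and it is what prevents the uniform bound from extending to $\gamma = 4\pi$.
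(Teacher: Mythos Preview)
Your argument is correct. Note, however, that the paper does not supply its own proof of this proposition: it is stated with a citation to Cao~\cite{Ca} and used as a black box. So there is no in-paper proof to compare against. Your approach---splitting $\mathbb{R}^2$ into $\{|u|\le R\}$ and $\{|u|>R\}$, controlling the first region by the $L^2$ norm, bounding $|A_R|$ via Chebyshev, truncating to $v=(|u|-R)^+$, absorbing the cross term $2Rv$ with Young, and reducing to the bounded-domain Moser inequality through Schwarz symmetrization and P\'olya--Szeg\H{o}---is precisely the standard route and is essentially how the result is proved in the cited reference. One cosmetic point: in your final displayed bound the $A_R$ contribution should carry an extra additive $|A_R|$ (from writing $e^{\gamma(1+\varepsilon)v^2}=(e^{\gamma(1+\varepsilon)v^2}-1)+1$), but this is harmlessly absorbed into the constant $C$.
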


\begin{proposition}\label{PRb}
\cite{AdY}. If $\gamma>0$, $0\leq \alpha<2$ and $u\in H^{1}(\mathbb{R}^{2})$, then
\begin{eqnarray*}
    \begin{aligned}\displaystyle
    \int\limits_{\mathbb{R}^{2}}\frac{e^{\gamma|u|^{2}}-1}{|x|^{\alpha}}dx<\infty.
    \end{aligned}
\end{eqnarray*}
Moreover, if $\|\nabla u\|^{2}_{2}\leq1$, $\|u\|_{2}\leq M<\infty$ and $\gamma/4\pi+\alpha/2<1$. Then exists a constant $C(M,\gamma)$ such that
\begin{eqnarray*}
    \begin{aligned}\displaystyle
\sup\limits_{\|\nabla u\|_{2}\leq1}\int\limits_{\mathbb{R}^{2}}\frac{e^{\gamma|u|^{2}}-1}{|x|^{\alpha}}dx<C(M,\gamma)
    \end{aligned}
\end{eqnarray*}
and the above inequality does not holds if $\gamma/4\pi+\alpha/2>1$.
\end{proposition}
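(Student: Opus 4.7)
My plan is to split the domain as $\mathbb{R}^2=B_1\cup(\mathbb{R}^2\setminus B_1)$, reducing the outer piece to the standard unweighted Trudinger--Moser inequality of Proposition \ref{PRa} and absorbing the singular weight on $B_1$ by a Hölder decomposition whose exponents are tuned to the subcritical relation $\gamma/4\pi+\alpha/2<1$. On $\mathbb{R}^2\setminus B_1$ the weight satisfies $|x|^{-\alpha}\le 1$, so
\[
\int_{\mathbb{R}^2\setminus B_1}\frac{e^{\gamma u^2}-1}{|x|^\alpha}\,dx\le\int_{\mathbb{R}^2}(e^{\gamma u^2}-1)\,dx,
\]
which is finite by Proposition \ref{PRa}(i) for fixed $u$ and, under $\|\nabla u\|_2\le 1$, $\|u\|_2\le M$ and $\gamma<4\pi$ (forced by the subcritical hypothesis), uniformly bounded by Proposition \ref{PRa}(ii).

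On $B_1$ I would apply Hölder's inequality with conjugate exponents $p,q>1$ to obtain
\[
\int_{B_1}\frac{e^{\gamma u^2}-1}{|x|^\alpha}\,dx\le\left(\int_{B_1}(e^{\gamma u^2}-1)^p\,dx\right)^{1/p}\left(\int_{B_1}|x|^{-\alpha q}\,dx\right)^{1/q}.
\]
Using the elementary bound $(e^t-1)^p\le e^{pt}$ for $t\ge 0,\ p\ge 1$, the first factor is controlled by $\int_{B_1}(e^{p\gamma u^2}-1)\,dx+|B_1|$, which by Proposition \ref{PRa} is finite (respectively uniformly bounded by $\mathcal{C}(M,p\gamma)$) as soon as $p\gamma<4\pi$. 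The second factor is finite iff $\alpha q<2$, equivalently $p>2/(2-\alpha)$. For the pointwise assertion, any $p>2/(2-\alpha)$ suffices; for the supremum statement under $\|\nabla u\|_2\le 1$ one needs simultaneously $p<4\pi/\gamma$ and $p>2/(2-\alpha)$, and an interval of admissible $p$ exists exactly when $2/(2-\alpha)<4\pi/\gamma$, which rearranges to $\gamma/4\pi+\alpha/2<1$. Fixing such a $p$, Proposition \ref{PRa}(ii) produces the desired uniform constant $C(M,\gamma)$.

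For the sharpness claim that the inequality fails when $\gamma/4\pi+\alpha/2>1$, the natural test is the Moser sequence
\[
u_n(x)=\frac{1}{\sqrt{2\pi}}\begin{cases}(\log n)^{1/2},&|x|\le 1/n,\\(\log n)^{-1/2}\log(1/|x|),& 1/n<|x|\le 1,\\0,&|x|>1,\end{cases}
\]
which satisfies $\|\nabla u_n\|_2^2=1$ and $\|u_n\|_2\le C$ independently of $n$. On $B_{1/n}$ we have $u_n^2=\log n/(2\pi)$, hence $e^{\gamma u_n^2}=n^{\gamma/(2\pi)}$, and a direct polar-coordinate computation gives
\[
\int_{B_{1/n}}\frac{e^{\gamma u_n^2}-1}{|x|^\alpha}\,dx\sim\frac{2\pi}{2-\alpha}\,n^{\gamma/(2\pi)-(2-\alpha)},
\]
which blows up precisely when $\gamma/(2\pi)>2-\alpha$, i.e.\ $\gamma/4\pi+\alpha/2>1$. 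The main technical point is the careful balancing of the Hölder exponents so that the threshold condition appears exactly as the linear combination $\gamma/4\pi+\alpha/2=1$; beyond that, everything reduces to the unweighted Trudinger--Moser inequality already recorded in Proposition \ref{PRa}.
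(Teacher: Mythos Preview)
The paper does not supply its own proof of this proposition: it is quoted verbatim as a known result from Adimurthi--Yang \cite{AdY}, so there is no in-paper argument to compare against. Your proof is correct and is in fact the standard route to this singular Trudinger--Moser inequality: split off the exterior of the unit ball where the weight is bounded and appeal to Proposition~\ref{PRa}, then on $B_1$ use a H\"older decoupling with exponents chosen so that $p\gamma<4\pi$ and $\alpha q<2$ simultaneously, the compatibility of which is precisely the subcritical relation $\gamma/4\pi+\alpha/2<1$. The sharpness test via the Moser sequence is also the expected one, and your exponent count $n^{\gamma/(2\pi)-(2-\alpha)}$ is accurate. In short, your argument is sound and matches the approach of the cited reference; the paper itself contributes nothing further here.
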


We recall that the embedding
\begin{eqnarray*}
    \begin{aligned}\displaystyle
    H^{1}(\mathbb{R}^{2})\ni u \mapsto \frac{e^{|u|^{\zeta}}}{|x|^{\alpha}}\in L^{1}(\mathbb{R}^{2})
    \end{aligned}
\end{eqnarray*}
is compact for all $\zeta\in \ [1,2)$ and continuous for $\zeta = 2$.\\

Now we state the doubly-weighted Hardy-Littlewood-Sobolev inequality proved in \cite{SW}.

\begin{proposition}\label{PRc}(Doubly Weighted Hardy-Littlewood-Sobolev inequality). Let $t,r>1$, $0<\mu<2$, $\alpha+\beta\geq0$ and $\alpha+\beta+\mu\leq 2$ with $\frac{1}{t}+\frac{\mu+\alpha+\beta}{2}+\frac{1}{r}=2$, $\alpha<\frac{2}{t^{\prime}}$, $\beta<\frac{2}{r^{\prime}}$ $f\in L^{t}(\mathbb{R}^{2})$ and $h\in L^{r}(\mathbb{R}^{2})$, where $t^{\prime}$ and $r^{\prime}$ denote the Holder conjugate of $t$ and $r$ respectively. Then there exists a sharp constant $C(\alpha,\beta,t,\mu,r)$, independent of $f,h$, such that
\begin{eqnarray*}
    \begin{aligned}\displaystyle
    \int_{\mathbb{R}^{2}}\int_{\mathbb{R}^{2}}\frac{f(x)h(y)}{|x-y|^{\mu}|y|^{\alpha}|x|^{\beta}}dxdy \leq C(\alpha,\beta,t,\mu,r)\|f\|_{L^{t}(\mathbb{R}^{2})}\|h\|_{L^{r}(\mathbb{R}^{2})}.
    \end{aligned}
\end{eqnarray*}
\end{proposition}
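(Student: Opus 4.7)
My plan is to establish the doubly weighted bilinear estimate by combining the classical (unweighted) Hardy-Littlewood-Sobolev inequality with one-dimensional weighted Hardy-type inequalities, after decomposing the domain of integration into a ``near-diagonal'' region $|y|\sim|x|$ and a complementary ``off-diagonal'' region. By duality, the bilinear estimate is equivalent to the boundedness of the linear operator
\begin{equation*}
Tf(x) := \frac{1}{|x|^{\beta}}\int_{\mathbb{R}^{2}} \frac{f(y)}{|x-y|^{\mu}\,|y|^{\alpha}}\,dy
\end{equation*}
from $L^{t}(\mathbb{R}^{2})$ into $L^{r'}(\mathbb{R}^{2})$, and the scaling identity $\tfrac{1}{t}+\tfrac{1}{r}+\tfrac{\mu+\alpha+\beta}{2}=2$ is precisely the dilation-invariance condition compatible with this mapping.

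First, I would split $Tf = T_{\mathrm{far}}f + T_{\mathrm{near}}f$ according to whether $|y|\notin[|x|/2,\,2|x|]$ or $|y|\in[|x|/2,\,2|x|]$. On the far region, $|x-y|\sim\max(|x|,|y|)$, so the kernel becomes essentially separable and $T_{\mathrm{far}}f(x)$ is controlled by integrals of the form $|x|^{-\beta-\mu}\int_{|y|\leq|x|/2}|y|^{-\alpha}|f(y)|\,dy$ together with a symmetric piece in the region $|y|\geq 2|x|$. Passing to polar coordinates reduces the $L^{r'}$ bound on these expressions to one-dimensional weighted Hardy inequalities on $(0,\infty)$, which are valid precisely because $\alpha<2/t'$ and $\beta<2/r'$. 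On the near region, the weights $|x|^{-\beta}$ and $|y|^{-\alpha}$ are comparable to a single factor $|x|^{-(\alpha+\beta)}$, and after a dyadic decomposition $\mathbb{R}^{2}=\bigcup_{k\in\mathbb{Z}}A_{k}$ with $A_{k}=\{2^{k}\leq|x|<2^{k+1}\}$ the estimate on each annulus reduces, via the rescaling $x\mapsto 2^{-k}x$, to the unweighted Hardy-Littlewood-Sobolev inequality; the remaining conditions $\alpha+\beta\geq 0$ and $\alpha+\beta+\mu\leq 2$ guarantee that the exponents lie in the admissible range for unweighted HLS and that the geometric sum over $k$ converges.

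The main obstacle I anticipate is the quantitative tracking of constants across all four hypotheses: the one-dimensional Hardy step degenerates exactly when $\alpha$ or $\beta$ meets the endpoint $2/t'$ or $2/r'$, so verifying that the strict inequalities yield uniform bounds requires some care, and the scaling identity must be invoked at each stage so that the exponents produced by the dyadic estimates match those prescribed by duality. A cleaner and historically faithful alternative, closer to Stein and Weiss's treatment in \cite{SW}, is to first handle the diagonal case $t=r=2$ by Schur's test with the radial test function $w(x)=|x|^{s}$ for a suitable exponent $s$, and then extend to general $(t,r)$ by complex interpolation along an analytic family of weighted convolution operators; this route bypasses the dyadic bookkeeping at the cost of invoking the machinery of complex interpolation.
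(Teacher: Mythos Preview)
The paper does not supply its own proof of this proposition: it is quoted verbatim as a known result, with the attribution ``proved in \cite{SW}'' (Stein--Weiss). There is therefore no argument in the paper against which to compare your attempt.

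Your sketch is a standard and essentially sound route to the inequality. The duality reduction to $T:L^{t}\to L^{r'}$ is correct, the near/far splitting is the natural move, and the far piece does indeed reduce to one-dimensional weighted Hardy inequalities governed by $\alpha<2/t'$ and $\beta<2/r'$. One point to tighten: on the near-diagonal annuli you cannot apply unweighted HLS directly with the exponents $(t,r)$, since $\tfrac{1}{t}+\tfrac{1}{r}+\tfrac{\mu}{2}=2-\tfrac{\alpha+\beta}{2}\neq 2$ in general; you must first use H\"older on each annulus $A_{k}$ to pass to exponents $(p,q)$ satisfying the unweighted scaling, which is exactly where $\alpha<2/t'$, $\beta<2/r'$ re-enter. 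The resulting sum over $k$ is then handled by the discrete H\"older inequality, and the condition $\alpha+\beta+\mu\le 2$ is precisely $\tfrac{1}{t}+\tfrac{1}{r}\ge 1$, which makes that step go through. Your alternative remark is accurate: the original argument in \cite{SW} proceeds via a Schur-type estimate in the $L^{2}$ case followed by interpolation, rather than by dyadic decomposition.
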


Applying the Doubly Weighted Hardy-Littlewood-Sobolev inequality, we know
\begin{eqnarray*}
    \begin{aligned}\displaystyle
    \int_{\mathbb{R}^{2}}\Big(I_{\mu_{2}}\ast \frac{F(u)}{|x|^{\beta}}\Big)\frac{F(u)}{|x|^{\beta}}dx, \ \ \int_{\mathbb{R}^{2}}\Big(I_{\mu_{1}}\ast \frac{G(u)}{|x|^{\alpha}}\Big)\frac{G(u)}{|x|^{\alpha}}dx
    \end{aligned}
\end{eqnarray*}
are well defined if $F(u),G(u)\in L^{t}(\mathbb{R}^{2})$ for $t>1$ given by
\begin{eqnarray*}
    \begin{aligned}\displaystyle
    \frac{2}{t}+\frac{2-\mu_{2}+2\beta}{2}=2, \ \ \frac{2}{t}+\frac{2-\mu_{1}+2\alpha}{2}=2.
    \end{aligned}
\end{eqnarray*}
This means that we must require
\begin{eqnarray*}
    \begin{aligned}\displaystyle
    F(u)\in L^{\frac{4}{2+\mu_{2}-2\beta}}(\mathbb{R}^{2}), \ \ G(u)\in L^{\frac{4}{2+\mu_{1}-2\alpha}}(\mathbb{R}^{2}).
    \end{aligned}
\end{eqnarray*}

In order to apply variational methods, we consider the following subspace of $H^{1}(\mathbb{R}^{2})$
\begin{eqnarray*}
    \begin{aligned}\displaystyle
    H^{1}_{V}(\mathbb{R}^{2})=\{u\in H^{1}(\mathbb{R}^{2}): \int_{\mathbb{R}^{2}}V(x)u^{2}dx<\infty\},
    \end{aligned}
\end{eqnarray*}
which is a Hilbert space when endowed with the inner product
\begin{eqnarray*}
    \begin{aligned}\displaystyle
     \langle u,v \rangle=\int_{\mathbb{R}^{2}}[\nabla u\nabla v + V(x)uv]dx, \ \ u,v\in H^{1}_{V}(\mathbb{R}^{2})
    \end{aligned}
\end{eqnarray*}
to which corresponds the norm
\begin{eqnarray*}
    \begin{aligned}\displaystyle
    \|u\|=\Big\{\int_{\mathbb{R}^{2}}[|\nabla u|^{2} + V(x)|u|^{2}]dx\Big\}^{\frac{1}{2}}.
    \end{aligned}
\end{eqnarray*}

Assumptions $(V_{1})$ and $(V_{2})$ imply that the embedding $H^{1}_{V}(\mathbb{R}^{2})\hookrightarrow L^{q}(\mathbb{R}^{2})$ for all $1\leq q <\infty$ are continuous, and the embedding
\begin{equation}\label{embed}
    \begin{aligned}\displaystyle
 H^{1}_{V}(\mathbb{R}^{2})\hookrightarrow L^{q}(\mathbb{R}^{2})
    \end{aligned}
\end{equation}
are compact for all $1\leq q <\infty$(see\cite{Souza}).
Set $E:=H^{1}_{V}(\mathbb{R}^{2})\times H^{1}_{V}(\mathbb{R}^{2})$. Then E endowed with the inner product
\begin{eqnarray*}
    \begin{aligned}\displaystyle
     \langle (u,v),(\varphi,\psi) \rangle_{E}=\int_{\mathbb{R}^{2}}[\nabla u\nabla \varphi +\nabla v\nabla \psi + V(x)u\varphi +V(x)v\psi]dx
    \end{aligned}
\end{eqnarray*}
for all $(u,v),(\varphi,\psi)\in E$, to which corresponds the norm $\|(u,v)\|_{E}=\langle (u,v), (u,v)\rangle_{E}^{\frac{1}{2}}$.

Then the energy functional associated with $(\ref{a})$ is given by $\Phi:E\rightarrow R$, where
\begin{eqnarray*}
    \begin{aligned}\displaystyle
    \Phi(u,v)=\langle u,v\rangle-\frac{1}{2}\int_{\mathbb{R}^{2}}\Big(I_{\mu_{2}}\ast \frac{F(u)}{|x|^{\beta}}\Big)\frac{F(u)}{|x|^{\beta}}dx-\frac{1}{2}\int_{\mathbb{R}^{2}}\Big(I_{\mu_{1}}\ast \frac{G(v)}{|x|^{\alpha}}\Big)\frac{G(v)}{|x|^{\alpha}}dx.
    \end{aligned}
\end{eqnarray*}
Using standard arguments it is possible to verify that $\Phi$ is well defined and is of class $C^{1}$ with
\begin{eqnarray*}
    \begin{aligned}\displaystyle
    \Phi^{\prime}(u,v)(\varphi,\psi)&=\int_{\mathbb{R}^{2}}[\nabla u\nabla \psi +\nabla v\nabla \varphi +V(x)u\psi+V(x)v\varphi]\\
    &-\int_{\mathbb{R}^{2}}\Big(I_{\mu_{2}}\ast \frac{F(u)}{|x|^{\beta}}\Big)\frac{f(u)\varphi}{|x|^{\beta}}dx-\int_{\mathbb{R}^{2}}\Big(I_{\mu_{1}}\ast \frac{G(v)}{|x|^{\alpha}}\Big)\frac{g(v)\psi}{|x|^{\alpha}}dx \ \ \mbox{for} \ \ \mbox{all} \ \ (\varphi,\psi)\in E.
    \end{aligned}
\end{eqnarray*}

Let us define the following subspaces of $E$,
\begin{eqnarray*}
    \begin{aligned}\displaystyle
    E^{+}=\{(u,u)\in E\}\ \ and\ \  E^{-}=\{(v,-v)\in E\}
    \end{aligned}
\end{eqnarray*}
with
\begin{eqnarray*}
    \begin{aligned}\displaystyle
    (u,v)=\frac{1}{2}(u+v,u+v)+\frac{1}{2}(u-v,v-u),
    \end{aligned}
\end{eqnarray*}
then $E^{+}$ is orthogonal to $E^{-}$ with respect to the inner product $\langle \cdot,\cdot\rangle$. For every $z=(u,v)$, set
\begin{eqnarray*}
    \begin{aligned}\displaystyle
    z^{+}:=\Big(\frac{u+v}{2},\frac{u+v}{2}\Big) \ and \ z^{-}:=\Big(\frac{u-v}{2},\frac{v-u}{2}\Big).
    \end{aligned}
\end{eqnarray*}
Note that $z^{+}\in E^{+}$ and $z^{-}\in E^{-}$, then we have $E=E^{+}\bigoplus E^{-}$.

From $(H_{0}^{\prime}),(H_{1}^{\prime})$, we have the following immediate result: for all $\gamma>\gamma_{0}$ and $q\geq1$, for any given $\varepsilon >0$, there exist constants $b_{1},b_{2}>0$ such that
\begin{equation}\label{fnon}
    \begin{aligned}\displaystyle
    f(s),g(s)\leq\varepsilon|s|+b_{1}|s|^{q-1}(e^{\gamma s^{2}}-1),\ \ \mbox{for} \ \mbox{all} \ s\in \mathbb{R},
    \end{aligned}
\end{equation}
and
\begin{equation}\label{fnonb}
    \begin{aligned}\displaystyle
    F(s),G(s)\leq\frac{\varepsilon}{2}|s|^{2}+b_{2}|s|^{q}(e^{\gamma s^{2}}-1),\ \ \mbox{for} \ \mbox{all} \ s\in \mathbb{R}.
    \end{aligned}
\end{equation}

\begin{lemma}\label{LEbd}(Cauchy-Schwarz type inequality)\cite{ack}. For $g,h\in L^{1}_{loc}(\mathbb{R}^{2})$, there holds
\begin{eqnarray*}
    \begin{aligned}\displaystyle
    \int_{\mathbb{R}^{2}}(I_{\mu}\ast|g|)|h|dx\leq \Big[\int_{\mathbb{R}^{2}}(I_{\mu}\ast|g|)|g|dx\Big]^{\frac{1}{2}} \Big[\int_{\mathbb{R}^{2}}(I_{\mu}\ast|h|)|h|dx\Big]^{\frac{1}{2}}.
    \end{aligned}
\end{eqnarray*}
\end{lemma}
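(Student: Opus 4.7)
The plan is to reveal the positive semidefinite structure hidden in the bilinear form $B(g,h):=\int_{\mathbb{R}^{2}}(I_{\mu}\ast g)h\,dx$, because once one knows that $B$ is symmetric and positive semidefinite on its natural domain, the claimed inequality is just the standard Cauchy–Schwarz inequality for such forms: $|B(g,h)|\leq B(g,g)^{1/2}B(h,h)^{1/2}$. Symmetry is a trivial consequence of Fubini's theorem combined with $I_{\mu}(x-y)=I_{\mu}(y-x)$, so the real content is the positive semidefiniteness.

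The most transparent route is to use the semigroup identity for Riesz kernels: for $\mu\in(0,2)$ on $\mathbb{R}^{2}$ there is a constant $c_{\mu}>0$ with $I_{\mu}=c_{\mu}\,I_{\mu/2}\ast I_{\mu/2}$, which may be verified by taking Fourier transforms (the Riesz potential has Fourier symbol a constant multiple of $|\xi|^{-\mu}$). Inserting this into the left-hand side and applying Fubini yields
\begin{equation*}
\int_{\mathbb{R}^{2}}(I_{\mu}\ast|g|)|h|\,dx=c_{\mu}\int_{\mathbb{R}^{2}}\bigl(I_{\mu/2}\ast|g|\bigr)\bigl(I_{\mu/2}\ast|h|\bigr)\,dx,
\end{equation*}
together with the two analogous identities for the right-hand side integrals (with $|g|,|g|$ and $|h|,|h|$). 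The displayed identity expresses the left-hand side as the $L^{2}(\mathbb{R}^{2})$ inner product of $\sqrt{c_{\mu}}(I_{\mu/2}\ast|g|)$ and $\sqrt{c_{\mu}}(I_{\mu/2}\ast|h|)$, and the classical Cauchy–Schwarz inequality in $L^{2}(\mathbb{R}^{2})$ then produces exactly the required bound. A purely Fourier-based variant of the same argument also works: by Plancherel, $B(g,h)=C\int|\xi|^{-\mu}\hat g(\xi)\overline{\hat h(\xi)}\,d\xi$, which factors as the $L^{2}$ pairing of $|\xi|^{-\mu/2}\hat g$ and $|\xi|^{-\mu/2}\hat h$, and Cauchy–Schwarz finishes.

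The main technical nuisance (rather than a genuine conceptual obstacle) is that $g,h$ are merely in $L^{1}_{\mathrm{loc}}(\mathbb{R}^{2})$, so a priori neither the convolutions $I_{\mu/2}\ast|g|$, $I_{\mu/2}\ast|h|$ nor the integrals in question need be finite. I would handle this by noting that if either $\int(I_{\mu}\ast|g|)|g|\,dx$ or $\int(I_{\mu}\ast|h|)|h|\,dx$ is infinite the claimed inequality is trivially true, and otherwise the identity above shows that $I_{\mu/2}\ast|g|$ and $I_{\mu/2}\ast|h|$ lie in $L^{2}(\mathbb{R}^{2})$, so all manipulations are rigorous. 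To justify the use of Fubini and the semigroup property in the presence of merely locally integrable data, one first truncates: replace $|g|,|h|$ by $|g|\chi_{B_{R}}$, $|h|\chi_{B_{R}}$, where the functions are compactly supported and bounded (so everything converges absolutely), prove the inequality in that setting, and then pass to the limit $R\to\infty$ via the monotone convergence theorem applied to each of the three nonnegative integrands.
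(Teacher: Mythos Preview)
Your argument is correct and is, in fact, the standard proof of this Cauchy--Schwarz type inequality for Riesz convolutions. The paper itself does not supply a proof of this lemma: it is simply stated with a citation to Ackermann~\cite{ack}, so there is no in-paper argument to compare against. Your approach via the semigroup identity $I_{\mu}=c_{\mu}\,I_{\mu/2}\ast I_{\mu/2}$ and the $L^{2}$ Cauchy--Schwarz inequality is exactly the mechanism behind the result in the cited reference.

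One very minor remark on the technical handling: truncating by $\chi_{B_{R}}$ makes $|g|,|h|$ compactly supported but not bounded (an $L^{1}_{\mathrm{loc}}$ function restricted to a ball need not be in $L^{\infty}$). This is harmless, however, because everything in sight is nonnegative, so Tonelli's theorem justifies all the interchanges of integration without any finiteness hypothesis; the identity
\[
\int_{\mathbb{R}^{2}}(I_{\mu}\ast|g|)|h|\,dx=c_{\mu}\int_{\mathbb{R}^{2}}(I_{\mu/2}\ast|g|)(I_{\mu/2}\ast|h|)\,dx
\]
holds in $[0,\infty]$ outright, and the truncation/monotone-convergence step can be dispensed with entirely. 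Your case analysis (if either diagonal term is infinite the inequality is trivial; otherwise both $I_{\mu/2}\ast|g|$ and $I_{\mu/2}\ast|h|$ lie in $L^{2}$ and ordinary Cauchy--Schwarz applies) then finishes the proof cleanly.
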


\section{{\bfseries On the Palais-Smale sequence}}\label{STLEMMA}
\begin{lemma}\label{LEca}The following inequality holds:
\begin{eqnarray*}
    \begin{aligned}\displaystyle
    st\leq
    \left\{ \arraycolsep=1.5pt
       \begin{array}{ll}
        (e^{t^{2}}-1)+s(\log s)^{\frac{1}{2}}, \ \ \ & for \ all \ t\geq0 \ and \ s\geq e^{\frac{1}{4}};\\[2mm]
        (e^{t^{2}}-1)+\frac{1}{2}s^{2} \ \ \ & for \ all \ t\geq0 \ and \ 0\leq s\leq e^{\frac{1}{4}}.\\[2mm]
        \end{array}
    \right.
    \end{aligned}
\end{eqnarray*}
\end{lemma}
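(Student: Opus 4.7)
The plan is to verify each of the two regimes by direct elementary estimates; no auxiliary machinery beyond $e^{x}\ge 1+x$ and AM--GM will be needed.

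For the regime $0\le s\le e^{1/4}$, I will exploit that $e^{t^{2}}-1\ge t^{2}$ for $t\ge 0$ (convexity of the exponential), and then the AM--GM inequality $st\le \tfrac12 s^{2}+\tfrac12 t^{2}$ will give $st\le \tfrac12 s^{2}+(e^{t^{2}}-1)$ at once. Note that this step is actually valid for every $s\ge 0$; the restriction $s\le e^{1/4}$ only plays a role in deciding which of the two bounds is the useful one, since the $s(\log s)^{1/2}$ bound from the first line of the lemma is sharper asymptotically but requires $\log s\ge 0$.

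For the regime $s\ge e^{1/4}$, the key is to introduce the threshold
\begin{equation*}
t_{0}:=(\log s)^{1/2},
\end{equation*}
which is well defined and satisfies $2t_{0}\ge 1$ precisely because $s\ge e^{1/4}$. I will split on whether $t\le t_{0}$ or $t>t_{0}$. In the sub-case $0\le t\le t_{0}$, one has $st\le s t_{0}=s(\log s)^{1/2}$ and, since $e^{t^{2}}-1\ge 0$, the claim is immediate. In the sub-case $t>t_{0}$, I will write $t=t_{0}+\tau$ with $\tau>0$; using $e^{t^{2}}=s\,e^{\tau^{2}+2\tau t_{0}}$, the target inequality reduces to
\begin{equation*}
\tau+\tfrac{1}{s}\le e^{\tau^{2}+2\tau t_{0}}.
\end{equation*}
Here I will invoke $e^{x}\ge 1+x$ with $x=\tau^{2}+2\tau t_{0}\ge 0$, which gives $e^{\tau^{2}+2\tau t_{0}}\ge 1+2\tau t_{0}\ge 1+\tau\ge\tau+\tfrac{1}{s}$, where the penultimate step uses $2t_{0}\ge 1$ and the last step uses $s\ge e^{1/4}>1$.

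The only real content of the argument is choosing the threshold $t_{0}=(\log s)^{1/2}$, which is motivated by balancing the linear term $st$ against the exponential $e^{t^{2}}$: they are comparable precisely when $t^{2}\approx\log s$. Once this splitting is fixed, both sub-cases collapse to one-line applications of $e^{x}\ge 1+x$ together with the hypothesis $s\ge e^{1/4}$, so I do not anticipate any substantive obstacle.
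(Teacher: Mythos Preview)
Your argument is correct in both regimes. The AM--GM step for $0\le s\le e^{1/4}$ is clean, and in the regime $s\ge e^{1/4}$ your splitting at $t_{0}=(\log s)^{1/2}$ together with $e^{x}\ge 1+x$ and the observation $2t_{0}\ge 1$ closes the case $t>t_{0}$ exactly as you describe; the chain $\tau+\tfrac{1}{s}<1+\tau\le 1+2t_{0}\tau\le e^{\tau^{2}+2t_{0}\tau}$ is valid line by line.

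As for comparison with the paper: the paper does not prove this lemma at all but simply refers the reader to Lemma~2.4 of \cite{FJZ}. Your write-up therefore supplies a self-contained elementary proof where the paper only gives a citation, which is a net gain in readability. If you wish to align with the paper you could add the reference, but nothing in your argument needs to change.
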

\begin{proof}
See Lemma $2.4$ of \cite{FJZ}.
\end{proof}

\begin{lemma}\label{LEcb} Assume that $(H_{0}^{\prime})-(H_{2}^{\prime})$ hold. If $(u_{n},v_{n})\in E$ such that
\begin{equation}\label{PSa}
    \begin{aligned}\displaystyle
    \Phi(u_{n},v_{n})\rightarrow c \ \ and \ \ \Phi^{\prime}(u_{n},v_{n})(\varphi,\psi)\rightarrow 0 \ \ for \ all \ (\varphi,\psi)\in E,
    \end{aligned}
\end{equation}
then $(u_{n},v_{n})$ is bounded in $E$, and there exists a constant $C>0$ such that
\begin{equation}\label{PSb}
    \begin{aligned}\displaystyle
    \int_{\mathbb{R}^{2}}\Big(I_{\mu_{2}}\ast \frac{F(u_{n})}{|x|^{\beta}}\Big)\frac{f(u_{n})u_{n}}{|x|^{\beta}}dx\leq C, \quad \ \int_{\mathbb{R}^{2}}\Big(I_{\mu_{1}}\ast \frac{G(v_{n})}{|x|^{\alpha}}\Big)\frac{G(v_{n})v_{n}}{|x|^{\alpha}}dx\leq C,
    \end{aligned}
\end{equation}
\begin{equation}\label{PSc}
    \begin{aligned}\displaystyle
    \int_{\mathbb{R}^{2}}\Big(I_{\mu_{2}}\ast \frac{F(u_{n})}{|x|^{\beta}}\Big)\frac{F(u_{n})}{|x|^{\beta}}dx\leq C, \ \quad\ \  \int_{\mathbb{R}^{2}}\Big(I_{\mu_{1}}\ast \frac{G(v_{n})}{|x|^{\alpha}}\Big)\frac{G(v_{n})}{|x|^{\alpha}}dx\leq C.
    \end{aligned}
\end{equation}
Moreover, for every $\phi \in C^{\infty}_{0}(\mathbb{R}^{2})$,
\begin{equation}\label{PSd}
    \begin{aligned}\displaystyle
    &\lim\limits_{n\rightarrow\infty}\int_{\mathbb{R}^{2}}\Big(I_{\mu_{2}}\ast \frac{F(u_{n})}{|x|^{\beta}}\Big)\frac{f(u_{n})\phi}{|x|^{\beta}}dx=\int_{\mathbb{R}^{2}}\Big(I_{\mu_{2}}\ast \frac{F(u)}{|x|^{\beta}}\Big)\frac{f(u)\phi}{|x|^{\beta}}dx,\\
    &\lim\limits_{n\rightarrow\infty}\int_{\mathbb{R}^{2}}\Big(I_{\mu_{1}}\ast \frac{G(v_{n})}{|x|^{\alpha}}\Big)\frac{g(v_{n})\phi}{|x|^{\alpha}}dx=\int_{\mathbb{R}^{2}}\Big(I_{\mu_{1}}\ast \frac{G(v)}{|x|^{\alpha}}\Big)\frac{g(v)\phi}{|x|^{\alpha}}dx.
    \end{aligned}
\end{equation}
\end{lemma}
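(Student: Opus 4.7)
The plan is to first prove boundedness of $\{(u_n,v_n)\}$ in $E$, from which the uniform Choquard bounds (\ref{PSb})--(\ref{PSc}) follow directly, and then to pass to the limit in the non-local terms against test functions to obtain (\ref{PSd}).

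For the boundedness, I combine the Ambrosetti--Rabinowitz condition $(H_2^\prime)$ with the Palais--Smale data. Starting from $\Phi(z_n)-\tfrac{1}{\theta}\Phi'(z_n)(u_n,v_n)=c+o(1)+o(\|z_n\|_E)$ and using $sf(s)\ge\theta F(s)$ and $sg(s)\ge\theta G(s)$ inside each Choquard kernel, the weighted Choquard self-energies $I_{F,n}:=\tfrac{1}{2}\int(I_{\mu_2}\!\ast\!F(u_n)/|x|^\beta)F(u_n)/|x|^\beta$ and $I_{G,n}$ appear with positive coefficients while the indefinite scalar product $\langle u_n,v_n\rangle$ carries coefficient $1-2/\theta>0$. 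Substituting $\langle u_n,v_n\rangle=c+o(1)+I_{F,n}+I_{G,n}$ (from $\Phi(z_n)\to c$) and rearranging gives $I_{F,n}+I_{G,n}\le C+o(\|z_n\|_E)$, and another use of $(H_2^\prime)$ yields the same control for the $f(u)u$- and $g(v)v$-Choquard integrals. To upgrade to a norm bound I exploit the swap symmetry of the Hamiltonian bilinear form: testing $\Phi'(z_n)$ against $(v_n,u_n)$ gives
\[
\|z_n\|_E^2 \,=\, \Phi'(z_n)(v_n,u_n) + \int\!\Big(I_{\mu_2}\!\ast\!\tfrac{F(u_n)}{|x|^\beta}\Big)\tfrac{f(u_n)v_n}{|x|^\beta} + \int\!\Big(I_{\mu_1}\!\ast\!\tfrac{G(v_n)}{|x|^\alpha}\Big)\tfrac{g(v_n)u_n}{|x|^\alpha}.
\]
The first summand is $o(\|z_n\|_E)$, and each cross Choquard term is estimated by Lemma \ref{LEbd} (with $|g|=F(u_n)$ and $|h|=f(u_n)v_n$) by $(2I_{F,n})^{1/2}\big[\int(I_{\mu_2}\!\ast\!f(u_n)v_n/|x|^\beta)f(u_n)v_n/|x|^\beta\big]^{1/2}$, the second factor being controlled via Proposition \ref{PRc}, H\"older's inequality and the exponential bound (\ref{fnon}), ultimately producing a quantity linear in $\|v_n\|$. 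Young's inequality then absorbs the resulting $\|v_n\|^2$-contribution into $\|z_n\|_E^2$ on the left with coefficient strictly less than $1$, giving $\|z_n\|_E^2 \le C + o(\|z_n\|_E) + \tfrac{1}{2}\|z_n\|_E^2$ and hence boundedness; the estimates (\ref{PSb})--(\ref{PSc}) follow once the $o(\|z_n\|_E)$ terms have become $O(1)$.

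For (\ref{PSd}), fix $\phi\in C_0^\infty(\mathbb R^2)$. Boundedness of $(u_n,v_n)$ in $E$ together with the compact embedding (\ref{embed}) yields a subsequence with $u_n\to u$ and $v_n\to v$ strongly in every $L^q(\mathbb R^2)$ and pointwise a.e. The growth estimates (\ref{fnon})--(\ref{fnonb}) combined with the Adimurthi--Yang inequality (Proposition \ref{PRb}) produce uniform $L^1$-dominators, so Vitali's convergence theorem gives $F(u_n)\to F(u)$ and $f(u_n)\phi\to f(u)\phi$ in $L^t(\mathbb R^2)$ for $t=4/(2+\mu_2-2\beta)$. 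Splitting the difference
\[
\int\!\Big(I_{\mu_2}\!\ast\!\tfrac{F(u_n)}{|x|^\beta}\Big)\tfrac{f(u_n)\phi}{|x|^\beta} - \int\!\Big(I_{\mu_2}\!\ast\!\tfrac{F(u)}{|x|^\beta}\Big)\tfrac{f(u)\phi}{|x|^\beta}
\]
into the two natural pieces (one with $F(u_n)-F(u)$ inside the convolution, the other with $f(u_n)-f(u)$ outside) and applying Proposition \ref{PRc} to each piece bounds them by a constant times $\|F(u_n)-F(u)\|_t$ or $\|[f(u_n)-f(u)]\phi\|_t$, both of which tend to $0$. The identity for the $G,g$ pair is completely analogous.

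The principal technical hurdle is the absorption step in the boundedness argument: the non-local Choquard cross integral couples the potentially exponential factor $f(u_n)$ with the factor $v_n$ which must remain linear in the final bound, so the Cauchy--Schwarz/doubly weighted HLS/Young chain has to be executed in precisely the right order to produce an absorbable $\|v_n\|^2$-term with coefficient strictly below $1$. This is where $(H_2^\prime)$, $(H_3^\prime)$, Lemma \ref{LEbd}, and the doubly weighted inequality of Proposition \ref{PRc} all come together, and it is the singular weights $|x|^{-\alpha}$ and $|x|^{-\beta}$ that force use of the doubly weighted (rather than classical) HLS inequality.
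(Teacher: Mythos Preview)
Your boundedness argument has a genuine circularity that the paper's proof is specifically designed to avoid. After applying Lemma~\ref{LEbd} you are left with the factor
\[
\Big[\int_{\mathbb{R}^{2}}\Big(I_{\mu_{2}}\ast \tfrac{f(u_{n})v_{n}}{|x|^{\beta}}\Big)\tfrac{f(u_{n})v_{n}}{|x|^{\beta}}\,dx\Big]^{1/2},
\]
which you propose to bound, via Proposition~\ref{PRc} and H\"older, by a constant times $\|v_{n}\|$. But that forces you to control $\|f(u_{n})\|_{L^{p}}$ for some $p>1$, and with the exponential growth in~(\ref{fnon}) this requires a Trudinger--Moser bound of the form $\int(e^{p\gamma u_{n}^{2}}-1)\le C$ uniformly in $n$. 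Such a bound is available only when $p\gamma\|u_{n}\|^{2}$ stays below the Trudinger--Moser threshold, i.e.\ only \emph{after} $\|u_{n}\|$ is known to be bounded---which is exactly what you are trying to prove. The subsequent Young-inequality absorption therefore never gets off the ground.

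The paper breaks this circle with Lemma~\ref{LEca}. Testing with $(v_{n},0)$ and normalising $V_{n}=v_{n}/\|v_{n}\|$ gives $\|v_{n}\|\le\int(I_{\mu_{2}}\ast F(u_{n})/|x|^{\beta})\,f(u_{n})V_{n}/|x|^{\beta}$. Now apply $st\le(e^{t^{2}}-1)+s(\log s)^{1/2}$ with $t=V_{n}$ and $s=f(u_{n})/C_{1}$: the exponential lands on the \emph{normalised} function $V_{n}$, for which Trudinger--Moser applies unconditionally, while the pointwise bound $f(s)\le C_{1}e^{\gamma s^{2}}$ turns $f(u_{n})(\log f(u_{n}))^{1/2}$ into (a constant times) $f(u_{n})u_{n}$, whose Choquard integral has already been bounded by $\tfrac{\theta}{\theta-1}(2c+\varepsilon_{n})$ via $(H_{2}')$. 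This is the missing idea in your proposal.

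A related issue affects your argument for~(\ref{PSd}): the claim that $F(u_{n})\to F(u)$ in $L^{4/(2+\mu_{2}-2\beta)}$ by Vitali again needs uniform integrability of $e^{t\gamma u_{n}^{2}}$, which Trudinger--Moser does not deliver unless $t\gamma\sup_{n}\|u_{n}\|^{2}<4\pi$; mere boundedness of $\|u_{n}\|$ is not enough. The paper instead invokes the de~Figueiredo--Miyagaki--Ruf convergence lemma, whose input is precisely the $L^{1}$-type bound~(\ref{PSb}) on the $f(u_{n})u_{n}$ Choquard integral rather than any exponential integrability of $u_{n}$.
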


\begin{proof} Taking $(\varphi,\psi)=(u_{n},v_{n})$ in $(\ref{PSa})$, we have
\begin{eqnarray*}
    \begin{aligned}\displaystyle
    2 \| (u_{n},v_{n}) \|^{2}_{E} -\int_{\mathbb{R}^{2}}\Big(I_{\mu_{2}}\ast \frac{F(u_{n})}{|x|^{\beta}}\Big)\frac{f(u_{n})u_{n}}{|x|^{\beta}}dx- \int_{\mathbb{R}^{2}}\Big(I_{\mu_{1}}\ast \frac{G(v_{n})}{|x|^{\alpha}}\Big)\frac{g(v_{n})v_{n}}{|x|^{\alpha}}dx = \varepsilon_{n},
    \end{aligned}
\end{eqnarray*}
which together with $(\ref{PSa})$ and hypothesis $(H_{2}^{\prime})$, implies
\begin{eqnarray*}
    \begin{aligned}\displaystyle
    &\int_{\mathbb{R}^{2}}\Big(I_{\mu_{2}}\ast \frac{F(u_{n})}{|x|^{\beta}}\Big)\frac{f(u_{n})u_{n}}{|x|^{\beta}}dx+ \int_{\mathbb{R}^{2}}\Big(I_{\mu_{1}}\ast \frac{G(v_{n})}{|x|^{\alpha}}\Big)\frac{g(v_{n})v_{n}}{|x|^{\alpha}}dx\\
=& 2c +\varepsilon_{n} + \int_{\mathbb{R}^{2}}\Big(I_{\mu_{2}}\ast \frac{F(u_{n})}{|x|^{\beta}}\Big)\frac{F(u_{n})}{|x|^{\beta}}dx+ \int_{\mathbb{R}^{2}}\Big(I_{\mu_{1}}\ast \frac{G(v_{n})}{|x|^{\alpha}}\Big)\frac{G(v_{n})}{|x|^{\alpha}}dx\\
\leq& 2c +\varepsilon_{n} +\frac{1}{\theta} \int_{\mathbb{R}^{2}}\Big(I_{\mu_{2}}\ast \frac{F(u_{n})}{|x|^{\beta}}\Big)\frac{f(u_{n})u_{n}}{|x|^{\beta}}dx+ \frac{1}{\theta} \int_{\mathbb{R}^{2}}\Big(I_{\mu_{1}}\ast \frac{G(v_{n})}{|x|^{\alpha}}\Big)\frac{g(v_{n})v_{n}}{|x|^{\alpha}}dx.
    \end{aligned}
\end{eqnarray*}
Thus, we obtain
\begin{eqnarray*}
    \begin{aligned}\displaystyle
    \int_{\mathbb{R}^{2}}\Big(I_{\mu_{2}}\ast \frac{F(u_{n})}{|x|^{\beta}}\Big)\frac{f(u_{n})u_{n}}{|x|^{\beta}}dx+ \int_{\mathbb{R}^{2}}\Big(I_{\mu_{1}}\ast \frac{G(v_{n})}{|x|^{\alpha}}\Big)\frac{g(v_{n})v_{n}}{|x|^{\alpha}}dx \leq \frac{\theta}{\theta -1} (2c +\varepsilon_{n})
    \end{aligned}
\end{eqnarray*}
where $\varepsilon_{n}\rightarrow0$, as $n\rightarrow\infty$. Next taking $(\varphi,\psi)=(v_{n},0)$ and $(\varphi,\psi)=(0,u_{n})$ in $(\ref{PSa})$ we have
\begin{eqnarray*}
    \begin{aligned}\displaystyle
    \|v_{n}\|^{2}\leq \int_{\mathbb{R}^{2}}\Big(I_{\mu_{2}}\ast \frac{F(u_{n})}{|x|^{\beta}}\Big)\frac{f(u_{n})v_{n}}{|x|^{\beta}}dx,
    \end{aligned}
\end{eqnarray*}
and
\begin{eqnarray*}
    \begin{aligned}\displaystyle
    \|u_{n}\|^{2}\leq \int_{\mathbb{R}^{2}}\Big(I_{\mu_{1}}\ast \frac{G(v_{n})}{|x|^{\alpha}}\Big)\frac{g(v_{n})u_{n}}{|x|^{\alpha}}dx.
    \end{aligned}
\end{eqnarray*}

Setting $U_{n}=u_{n}/\|u_{n}\|$ and $V_{n}=v_{n}/\|v_{n}\|$, we conclude that
\begin{eqnarray*}
    \begin{aligned}\displaystyle
    \|v_{n}\| \leq  \int_{\mathbb{R}^{2}}\Big(I_{\mu_{2}}\ast \frac{F(u_{n})}{|x|^{\beta}}\Big)\frac{f(u_{n})V_{n}}{|x|^{\beta}}dx,
    \end{aligned}
\end{eqnarray*}
and
\begin{eqnarray*}
    \begin{aligned}\displaystyle
    \|u_{n}\|\leq  \int_{\mathbb{R}^{2}}\Big(I_{\mu_{1}}\ast \frac{G(v_{n})}{|x|^{\alpha}}\Big)\frac{g(v_{n})U_{n}}{|x|^{\alpha}}dx.
    \end{aligned}
\end{eqnarray*}

From the exponential growth on $f$ and $(H_{1}^{\prime})$, we have
\begin{equation}\label{PSe}
    \begin{aligned}\displaystyle
    f(s)\leq C_{1}e^{\gamma s^{2}}, \ \mbox{for} \ \mbox{all} \ s\geq 0.
    \end{aligned}
\end{equation}

Using Lemma $\ref{LEca}$ with $t=V_{n}$ and $s=f(u_{n})/C_{1}$, where $C_{1}$ is the constant appearing in $(\ref{PSe})$, and the Trudinger-Moser inequality, we obtain
\begin{eqnarray*}
    \begin{aligned}\displaystyle
    &C_{1} \int_{\mathbb{R}^{2}}\Big(I_{\mu_{2}}\ast \frac{F(u_{n})}{|x|^{\beta}}\Big)\frac{f(u_{n})}{C_{1}}\frac{V_{n}}{|x|^{\beta}}dx\leq C_{1} \int_{\mathbb{R}^{2}}\Big(I_{\mu_{2}}\ast \frac{F(u_{n})}{|x|^{\beta}}\Big)\frac{e^{V_{n}^{2}}-1}{|x|^{\beta}}dx\\
    &+ C_{1} \int_{\{x\in \mathbb{R}^{2}:f(u_{n})/C_{1}\geq e^{1/4}\}}\Big(I_{\mu_{2}}\ast \frac{F(u_{n})}{|x|^{\beta}}\Big)\frac{f(u_{n})}{C_{1}|x|^{\beta}}\Big[\log \frac {f (u_{n})}{C_{1}}\Big]^{2}dx\\
    &+ \frac{1}{2} \int_{\{x\in \mathbb{R}^{2}:f(u_{n})/C_{1}\leq e^{1/4}\}}\Big(I_{\mu_{2}}\ast \frac{F(u_{n})}{|x|^{\beta}}\Big)\frac{[f(u_{n})]^{2}}{C_{1}^{2}|x|^{\beta}}dx\\
    &:= I_{1}+ I_{2} +I_{3}.
    \end{aligned}
\end{eqnarray*}

First, by the Hardy-Littlewood-Sobolev inequality, $V_{n}^{2}< 4\pi$, and $F(u_{n})\in L^{\frac{4}{2+\mu_{2}-2\beta}}$, we obtain
\begin{eqnarray*}
    \begin{aligned}\displaystyle
    I_{1}\leq |F(u_{n})|_{\frac{4}{2+\mu_{2}-2\beta}}|e^{V_{n}^{2}}-1|_{\frac{4}{2+\mu_{2}-2\beta}}
    \leq C_{2}.
    \end{aligned}
\end{eqnarray*}

Next by $(\ref{PSe})$, we get
\begin{eqnarray*}
    \begin{aligned}\displaystyle
I_{2} \leq C_{3} \int_{\mathbb{R}^{2}}\Big(I_{\mu_{2}}\ast \frac{F(u_{n})}{|x|^{\beta}}\Big)\frac{f(u_{n})u_{n}}{|x|^{\beta}}dx.
    \end{aligned}
\end{eqnarray*}

Finally, using $(H_{1}^{\prime})$, we have
\begin{eqnarray*}
    \begin{aligned}\displaystyle
    [f(s)]^{2} \ \leq \ f(s)s,
    \end{aligned}
\end{eqnarray*}
where $s\in \{s\in \mathbb{R}:s\geq0 \ and \ f(s)/C_{1} \leq e^{1/4} \}$. Thus
\begin{eqnarray*}
    \begin{aligned}\displaystyle
    I_{3}\leq C_{4} \int_{\mathbb{R}^{2}}\Big(I_{\mu_{2}}\ast \frac{F(u_{n})}{|x|^{\beta}}\Big)\frac{f(u_{n})u_{n}}{|x|^{\beta}}dx.
    \end{aligned}
\end{eqnarray*}
Thus, we obtain that
\begin{eqnarray*}
    \begin{aligned}\displaystyle
    C_{1} \int_{\mathbb{R}^{2}}\Big(I_{\mu_{2}}\ast \frac{F(u_{n})}{|x|^{\beta}}\Big)\frac{f(u_{n})}{C_{1}}\frac{V_{n}}{|x|^{\beta}}dx\leq C_{2}+ C_{5} \int_{\mathbb{R}^{2}}\Big(I_{\mu_{2}}\ast \frac{F(u_{n})}{|x|^{\beta}}\Big)\frac{f(u_{n})u_{n}}{|x|^{\beta}}dx,
    \end{aligned}
\end{eqnarray*}
which implies that
\begin{equation}\label{PSf}
    \begin{aligned}\displaystyle
    \|v_{n}\|\leq C_{2} + C \int_{\mathbb{R}^{2}}\Big(I_{\mu_{2}}\ast \frac{F(u_{n})}{|x|^{\beta}}\Big)\frac{f(u_{n})u_{n}}{|x|^{\beta}}dx.
    \end{aligned}
\end{equation}
Similarly, we have
\begin{equation}\label{PSg}
    \begin{aligned}\displaystyle
    \|u_{n}\|\leq C_{2} + C \int_{\mathbb{R}^{2}}\Big(I_{\mu_{1}}\ast \frac{G(v_{n})}{|x|^{\alpha}}\Big)\frac{g(v_{n})v_{n}}{|x|^{\alpha}}dx.
    \end{aligned}
\end{equation}

Now joining the estimates $(\ref{PSf})$ and $(\ref{PSg})$ we finally obtain
\begin{eqnarray*}
    \begin{aligned}\displaystyle
    \|u_{n}\|+\|v_{n}\|\leq\frac{\theta}{\theta-1}(2c+\varepsilon_{n}),
    \end{aligned}
\end{eqnarray*}
which implies that the boundedness of $(u_{n},v_{n})$. From this estimate, we obtain $(\ref{PSb})$ and $(\ref{PSc})$. Then by Lemma $2.1$ from \cite{DDRU}, we get $(\ref{PSd})$.
\end{proof}

\begin{lemma}\label{LEcc}Suppose that $(H_{0}^{\prime})-(H_{3}^{\prime})$ hold. If $(u_{n},v_{n})$ is a sequence such that $\Phi(u_{n})\rightarrow c$, $\Phi^{\prime}(u_{n},v_{n})\rightarrow 0$, and $(u_{0},v_{0})$ is its weak limit in E, then
\begin{eqnarray*}
    \begin{aligned}\displaystyle
    \int_{\mathbb{R}^{2}}\Big(I_{\mu_{2}}\ast \frac{F(u_{n})}{|x|^{\beta}}\Big)\frac{F(u_{n})}{|x|^{\beta}}dx \rightarrow \int_{\mathbb{R}^{2}}\Big(I_{\mu_{2}}\ast \frac{F(u_{0})}{|x|^{\beta}}\Big)\frac{F(u_{0})}{|x|^{\beta}}dx,
    \end{aligned}
\end{eqnarray*}
and
\begin{eqnarray*}
    \begin{aligned}\displaystyle
    \int_{\mathbb{R}^{2}}\Big(I_{\mu_{1}}\ast \frac{G(v_{n})}{|x|^{\alpha}}\Big)\frac{G(v_{n})}{|x|^{\alpha}}dx \rightarrow \int_{\mathbb{R}^{2}}\Big(I_{\mu_{1}}\ast \frac{G(v_{0})}{|x|^{\alpha}}\Big)\frac{G(v_{0})}{|x|^{\alpha}}dx.
    \end{aligned}
\end{eqnarray*}
\end{lemma}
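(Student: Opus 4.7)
The plan is to reduce the convergence of each Choquard-type bilinear integral to the strong convergence of $F(u_n)$ in an appropriate Lebesgue space, then establish that convergence via Vitali's theorem. The argument for $G(v_n)$ is identical after swapping $(\mu_2,\beta)$ with $(\mu_1,\alpha)$, so I focus on the $F$ statement.

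First, Lemma \ref{LEcb} gives boundedness of $(u_n,v_n)$ in $E$, so by the compact embedding (\ref{embed}) I may assume, up to a subsequence, that $u_n\to u_0$ strongly in $L^q(\mathbb{R}^2)$ for every $q\in[1,\infty)$ and a.e.\ in $\mathbb{R}^2$, whence continuity of $F$ gives $F(u_n)\to F(u_0)$ a.e. Setting $J(w):=\int_{\mathbb{R}^2}(I_{\mu_2}\ast w/|x|^\beta)(w/|x|^\beta)\,dx$, bilinearity and symmetry of $I_{\mu_2}$ yield
$$J(F(u_n))-J(F(u_0))=\int_{\mathbb{R}^2}\Big(I_{\mu_2}\ast\frac{F(u_n)-F(u_0)}{|x|^\beta}\Big)\frac{F(u_n)+F(u_0)}{|x|^\beta}\,dx,$$
and since $F\geq 0$, the Cauchy--Schwarz type inequality of Lemma \ref{LEbd} bounds its modulus by $\bigl(J(F(u_n))^{1/2}+J(F(u_0))^{1/2}\bigr)J(|F(u_n)-F(u_0)|)^{1/2}$. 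The first factor is uniformly bounded by (\ref{PSc}) together with Fatou's lemma, and the doubly-weighted Hardy--Littlewood--Sobolev inequality (Proposition \ref{PRc}) with $t=4/(2+\mu_2-2\beta)$ gives $J(|F(u_n)-F(u_0)|)\leq C\|F(u_n)-F(u_0)\|_{L^t(\mathbb{R}^2)}^2$. Thus the problem reduces to showing $F(u_n)\to F(u_0)$ in $L^t(\mathbb{R}^2)$.

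For that $L^t$ convergence I would apply Vitali's theorem. Beyond the a.e.\ convergence, this requires tightness at infinity and equi-integrability of $\{|F(u_n)|^t\}$. Using (\ref{fnonb}) I split $|F(u_n)|^t$ into a polynomial and an exponential part: the polynomial part is controlled uniformly by the strong $L^q$ convergence of $u_n$, yielding both tightness and absolute continuity. The exponential part I handle by splitting the domain according to $\{u_n\leq s_0\}$ (where $F(u_n)$ is bounded and a direct H\"older argument applies) and $\{u_n>s_0\}$ (where $(H_3^{\prime})$ replaces $F(u_n)$ by $M_0 f(u_n)$), after which a H\"older estimate combined with the singular Trudinger--Moser inequality (Proposition \ref{PRb}) applied to $u_n/\|u_n\|$ gives the desired uniform absolute continuity.

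The main obstacle is precisely this equi-integrability: the critical exponential growth forbids any direct use of dominated convergence, so I must select a Trudinger--Moser exponent $\gamma>\gamma_0$ and an auxiliary H\"older exponent so that, with the a priori bound $\|u_n\|\leq K$ furnished by Lemma \ref{LEcb}, the product $\gamma K^2$ stays strictly below the weighted Trudinger--Moser threshold appearing in Proposition \ref{PRb}. Once the strong $L^t$ convergence of $F(u_n)$ is secured, the bilinear reduction of the second paragraph delivers the claimed limit, and the parallel argument closes the $G(v_n)$ statement.
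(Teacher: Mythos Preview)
Your reduction to strong $L^{t}$ convergence of $F(u_{n})$ via bilinearity and Lemma~\ref{LEbd} is clean and would, if completed, yield the result. The gap is in the equi-integrability step. You say you will ``select a Trudinger--Moser exponent $\gamma>\gamma_{0}$ and an auxiliary H\"older exponent so that, with the a priori bound $\|u_{n}\|\leq K$ furnished by Lemma~\ref{LEcb}, the product $\gamma K^{2}$ stays strictly below the weighted Trudinger--Moser threshold.'' But Lemma~\ref{LEcb} only delivers $\|u_{n}\|\leq K$ with $K$ depending on the level $c$, and nothing in the hypotheses of the present lemma bounds $c$ below the critical threshold. If $K^{2}\geq 4\pi/\gamma_{0}$ (which you cannot exclude here), no choice of $\gamma>\gamma_{0}$ makes $\gamma K^{2}<4\pi$, and Proposition~\ref{PRb} gives you nothing. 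Your argument would go through in the subcritical regime, but as written it fails precisely in the critical case this lemma must also cover.

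The paper sidesteps this by never attempting $L^{t}$ equi-integrability of $F(u_{n})$. Instead it works directly with the nonlocal quantity: on balls $B_{R}$ it passes to the limit via the local convergence of $(I_{\mu_{2}}\ast F(u_{n})/|x|^{\beta})f(u_{n})/|x|^{\beta}$ together with $(H_{3}')$ and dominated convergence; for the tail over $\mathbb{R}^{2}\setminus B_{R}$, it splits into $\{|u_{n}|>K\}$ (where $(H_{3}')$ and the bound (\ref{PSb}) on $\int(I_{\mu_{2}}\ast F(u_{n})/|x|^{\beta})f(u_{n})u_{n}/|x|^{\beta}$ give smallness for $K$ large) and $\{|u_{n}|\leq K\}$ (where $F(s)\leq C(K)s^{2}$, the factor $|x|^{-\beta}\leq R^{-\beta}$, and Lemma~\ref{LEbd} combined with (\ref{PSc}) give smallness for $R$ large). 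The crucial point is that this route uses only the $L^{1}$ bound (\ref{PSb}) on the full nonlocal expression, which holds regardless of how large $\|u_{n}\|$ is. To repair your approach you would need to replace the Trudinger--Moser step by an argument that extracts equi-integrability of $|F(u_{n})|^{t}$ from (\ref{PSb})--(\ref{PSc}) alone; that is not obvious and you have not indicated how to do it.
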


\begin{proof}
First, for any $R>0$, we have,
\begin{eqnarray*}
    \begin{aligned}\displaystyle
    \int_{B_{R}(0)}\Big(I_{\mu_{2}}\ast \frac{F(u_{n})}{|x|^{\beta}}\Big)\frac{f(u_{n})}{|x|^{\beta}}dx \rightarrow \int_{B_{R}(0)}\Big(I_{\mu_{2}}\ast \frac{F(u_{0})}{|x|^{\beta}}\Big)\frac{f(u_{0})}{|x|^{\beta}}dx.
    \end{aligned}
\end{eqnarray*}
Then by $(H_{3}^{\prime})$ and the Lebesgue dominated convergence theorem, we conclude that
\begin{eqnarray*}
    \begin{aligned}\displaystyle
\lim\limits_{n\rightarrow\infty}\int_{B_{R}(0)}\Big(I_{\mu_{2}}\ast \frac{F(u_{n})}{|x|^{\beta}}\Big)\frac{F(u_{n})}{|x|^{\beta}}dx = \int_{B_{R}(0)}\Big(I_{\mu_{2}}\ast \frac{F(u_{0})}{|x|^{\beta}}\Big)\frac{F(u_{0})}{|x|^{\beta}}dx.
    \end{aligned}
\end{eqnarray*}

Now we show that given $\delta>0$, there exists $R>0$ such that
\begin{eqnarray*}
    \begin{aligned}\displaystyle
    \int_{\mathbb{R}^{2}\backslash B_{R}(0)}\Big(I_{\mu_{2}}\ast \frac{F(u_{n})}{|x|^{\beta}}\Big)\frac{F(u_{n})}{|x|^{\beta}}dx<\delta \ and \ \int_{\mathbb{R}^{2}\backslash B_{R}(0)}\Big(I_{\mu_{2}}\ast \frac{F(u_{0})}{|x|^{\beta}}\Big)\frac{F(u_{0})}{|x|^{\beta}}dx<\delta.
    \end{aligned}
\end{eqnarray*}

To prove this, choosing $K>0$, we obtain that
\begin{eqnarray*}
    \begin{aligned}\displaystyle
    &\int_{\{x\in\mathbb{R}^{2}\backslash B_{R},|u_{n}|>K\}}\Big(I_{\mu_{2}}\ast \frac{F(u_{n})}{|x|^{\beta}}\Big)\frac{F(u_{n})}{|x|^{\beta}}dx\\
\leq &M\int_{\{x\in\mathbb{R}^{2}\backslash B_{R},|u_{n}|>K\}}\Big(I_{\mu_{2}}\ast \frac{F(u_{n})}{|x|^{\beta}}\Big)\frac{f(u_{n})}{|x|^{\beta}}dx\\
\leq &\frac{M}{K}\int_{\{x\in\mathbb{R}^{2}\backslash B_{R},|u_{n}|>K\}}\Big(I_{\mu_{2}}\ast \frac{F(u_{n})}{|x|^{\beta}}\Big)\frac{f(u_{n})u_{n}}{|x|^{\beta}}dx\leq \frac{MC}{K}.
    \end{aligned}
\end{eqnarray*}
Taking $K$ large enough, we get
\begin{eqnarray*}
    \begin{aligned}\displaystyle
    \int_{\{x\in\mathbb{R}^{2}\backslash B_{R},|u_{n}|>K\}}\Big(I_{\mu_{2}}\ast \frac{F(u_{n})}{|x|^{\beta}}\Big)\frac{F(u_{n})}{|x|^{\beta}}dx <\frac{\delta}{2}.
    \end{aligned}
\end{eqnarray*}

Now, by $(H_{1}^{\prime})$, we have
\begin{eqnarray*}
    \begin{aligned}\displaystyle
    |F(s)|\leq C(\gamma,K)s^{2} \ for \ all \ s\in[-K,K].
    \end{aligned}
\end{eqnarray*}
Then, we obtain
\begin{eqnarray*}
    \begin{aligned}\displaystyle
    &\int_{\{x\in\mathbb{R}^{2}\backslash B_{R},|u_{n}|\leq K\}}\Big(I_{\mu_{2}}\ast \frac{F(u_{n})}{|x|^{\beta}}\Big)\frac{F(u_{n})}{|x|^{\beta}}dx\\
\leq& C(\gamma,K)\int_{\{x\in\mathbb{R}^{2}\backslash B_{R},|u_{n}|\leq K\}}\Big(I_{\mu_{2}}\ast \frac{F(u_{n})}{|x|^{\beta}}\Big)\frac{|u_{n}|^{2}}{|x|^{\beta}}dx\\
\leq& \frac{C(\gamma,K)}{R^{\beta}}\int_{\{x\in\mathbb{R}^{2}\backslash B_{R},|u_{n}|\leq K\}}\Big(I_{\mu_{2}}\ast \frac{F(u_{n})}{|x|^{\beta}}\Big)|u_{n}|^{2}dx.
    \end{aligned}
\end{eqnarray*}
By Lemma $\ref{LEbd}$, we obtain
\begin{eqnarray*}
    \begin{aligned}\displaystyle
    &\int_{\{x\in\mathbb{R}^{2}\backslash B_{R},|u_{n}|\leq K\}}\Big(I_{\mu_{2}}\ast \frac{F(u_{n})}{|x|^{\beta}}\Big)|u_{n}|^{2}dx\\
\leq& \Big(\int_{\mathbb{R}^{2}}\Big(I_{\mu_{2}}\ast \frac{F(u_{n})}{|x|^{\beta}}\Big)\frac{F(u_{n})}{|x|^{\beta}}dx\Big)^{\frac{1}{2}}\Big(\int_{\mathbb{R}^{2}}\Big(I_{\mu_{2}}\ast |u_{n}|^{2}\Big)|u_{n}|^{2}dx\Big)^{\frac{1}{2}}\leq C.
    \end{aligned}
\end{eqnarray*}
Taking $R$ sufficiently large, it follows that
\begin{eqnarray*}
    \begin{aligned}\displaystyle
    \int_{\{x\in\mathbb{R}^{2}\backslash B_{R},|u_{n}|\leq K\}}\Big(I_{\mu_{2}}\ast \frac{F(u_{n})}{|x|^{\beta}}\Big)\frac{F(u_{n})}{|x|^{\beta}}dx\leq \frac{1}{2}\delta.
    \end{aligned}
\end{eqnarray*}

Finally, Since $\delta>0$ is arbitrary, we have
\begin{eqnarray*}
    \begin{aligned}\displaystyle
\int_{\mathbb{R}^{2}}\Big(I_{\mu_{2}}\ast \frac{F(u_{n})}{|x|^{\beta}}\Big)\frac{F(u_{n})}{|x|^{\beta}}dx \rightarrow \int_{\mathbb{R}^{2}}\Big(I_{\mu_{2}}\ast \frac{F(u_{0})}{|x|^{\beta}}\Big)\frac{F(u_{0})}{|x|^{\beta}}dx \ \mbox{as} \ n\rightarrow\infty.
\end{aligned}
\end{eqnarray*}
Repeating the same way, we also get
\begin{eqnarray*}
    \begin{aligned}\displaystyle
\int_{\mathbb{R}^{2}}\Big(I_{\mu_{1}}\ast \frac{G(u_{n})}{|x|^{\alpha}}\Big)\frac{G(u_{n})}{|x|^{\alpha}}dx \rightarrow \int_{\mathbb{R}^{2}}\Big(I_{\mu_{1}}\ast \frac{G(u_{0})}{|x|^{\alpha}}\Big)\frac{G(u_{0})}{|x|^{\alpha}}dx \ \mbox{as} \ n\rightarrow\infty,
    \end{aligned}
\end{eqnarray*}
which completes the proof.
\end{proof}

To find a solution of $(\ref{a})$, we show that the functional $\Phi$ possesses the following geometry.

\begin{lemma}\label{LEcd}Suppose that $(V_{1})-(V_{2}),(H_{0}^{\prime})-(H_{1}^{\prime})$ hold. Then there exists $\rho,\sigma>0$ such that
\begin{eqnarray*}
    \begin{aligned}\displaystyle
    \kappa:=\inf\{\Phi(z):z\in E^{+},\|z\|=\rho\}\geq\sigma>0.
    \end{aligned}
\end{eqnarray*}
\end{lemma}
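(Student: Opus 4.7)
The strategy is to exploit the splitting $E=E^+\oplus E^-$: for $z=(u,u)\in E^+$ the bilinear part of $\Phi$ reduces to $\langle u,u\rangle=\|u\|^2=\frac{1}{2}\|z\|_E^2$, so I only need to show that each of the two Choquard-type nonlinearities is $o(\|z\|_E^2)$ as $\|z\|_E\to 0$. If I can do this, then $\Phi(z)\geq \frac{1}{4}\|z\|_E^2$ on a small enough sphere $\|z\|_E=\rho$, giving the lemma with $\sigma:=\rho^2/4$.

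For the $F$-integral, the plan is to combine three tools in sequence. First, apply the Doubly Weighted Hardy-Littlewood-Sobolev inequality (Proposition \ref{PRc}) with $t=r=4/(2+\mu_2-2\beta)\in (1,2]$, which is allowed because $\beta\leq\mu_2/2$, to obtain
$$\int_{\mathbb{R}^2}\Big(I_{\mu_2}\ast\frac{F(u)}{|x|^\beta}\Big)\frac{F(u)}{|x|^\beta}\,dx\leq C\|F(u)\|_t^2.$$
Next, invoke the pointwise estimate (\ref{fnonb}) with some $q>2$ and $\gamma>\gamma_0$, giving $F(u)\leq \frac{\varepsilon}{2}|u|^2+b_2|u|^q(e^{\gamma u^2}-1)$, and apply Minkowski followed by H\"older with a conjugate pair $(\tau,\tau')$ to split $\|F(u)\|_t$ into an $L^{2t}$-bound of $u^2$ and a product $\|u\|_{qt\tau}^q\cdot\|(e^{\gamma u^2}-1)\|_{t\tau'}$. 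Finally, use the elementary inequality $(e^a-1)^m\leq e^{ma}-1$ for $a\geq 0,\,m\geq 1$ together with the scaling $v=u/\|u\|$ to rewrite the exponential part as $\int(e^{t\tau'\gamma\|u\|^2 v^2}-1)\,dx$; Proposition \ref{PRa}(ii) then bounds this uniformly by a constant whenever $t\tau'\gamma\|u\|^2<4\pi$. Combining with the continuous embeddings in (\ref{embed}), one arrives at $\|F(u)\|_t\leq C_1\varepsilon\|u\|^2+C_2\|u\|^q$, so that the whole $F$-integral is bounded by $C(\varepsilon^2\|u\|^4+\|u\|^{2q})$. The same argument with $(\mu_2,\beta)$ replaced by $(\mu_1,\alpha)$ handles the $G$-integral.

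Plugging these estimates into $\Phi(z)$ yields $\Phi(z)\geq \|u\|^2-C(\|u\|^4+\|u\|^{2q})$, and since $q>2$ the subtracted terms are $o(\|u\|^2)$ as $\|u\|\to 0$. Choosing $\rho$ small enough therefore gives $\Phi(z)\geq \frac{1}{2}\|u\|^2=\rho^2/4=:\sigma>0$ whenever $\|z\|_E=\rho$, which is the claim. The main obstacle in executing this plan is the joint selection of the H\"older exponent $\tau$, the power $q$ in (\ref{fnonb}), and the radius $\rho$: one must ensure simultaneously that $t\tau'\gamma \rho^2<4\pi$ so that Proposition \ref{PRa}(ii) is applicable, that $q>2$ so that the nonlinear correction is strictly subquadratic after squaring, and that the resulting Sobolev exponents $2t$ and $qt\tau$ are finite. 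All three are compatible because $H^1_V(\mathbb{R}^2)\hookrightarrow L^s(\mathbb{R}^2)$ for every finite $s\geq 1$ by (\ref{embed}), while $\gamma$ may be fixed arbitrarily close to $\gamma_0$ and $\rho$ taken as small as needed.
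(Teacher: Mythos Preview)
Your proposal is correct and follows essentially the same route as the paper's proof: both apply the doubly weighted Hardy--Littlewood--Sobolev inequality with the exponent $t=\frac{4}{2+\mu_2-2\beta}$, invoke the pointwise bound (\ref{fnonb}), control the exponential factor via Proposition~\ref{PRa}(ii) after normalizing $u$, and finish using the embeddings (\ref{embed}). The only cosmetic differences are that the paper fixes $q=3$ while you keep $q>2$ general, and that you make the H\"older splitting and the compatibility constraint $t\tau'\gamma\rho^2<4\pi$ explicit, which the paper leaves implicit.
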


\begin{proof} Taking $q=3$ in $(\ref{fnonb})$, we know that
\begin{eqnarray*}
    \begin{aligned}\displaystyle
    F(s),G(s)\leq\frac{\epsilon}{2}|s|^{2}+b_{1}|s|^{3}(e^{\gamma s^{2}}-1),\ \ \mbox{for \ all} \ s\in \mathbb{R}.
    \end{aligned}
\end{eqnarray*}
By Proposition $\ref{PRa}$, one has
\begin{eqnarray*}
    \begin{aligned}\displaystyle
    \int\limits_{\mathbb{R}^{2}}(e^{\gamma|u|^{2}}-1)dx&=\int\limits_{\mathbb{R}^{2}}(e^{\gamma\|u\|^{2}(u/\|u\|)^{2}}-1)dx\leq C(M,2\pi), \ \mbox{for\ all}\ \|u\|\leq\sqrt{2\pi/\gamma}.
    \end{aligned}
\end{eqnarray*}
Then we deduce that
\begin{eqnarray*}
    \begin{aligned}\displaystyle
    \int_{\mathbb{R}^{2}}\Big(I_{\mu_{2}}\ast \frac{F(u)}{|x|^{\beta}}\Big)\frac{F(u)}{|x|^{\beta}}dx&\leq|F(u)|_{\frac{4}{2+\mu_{2}-2\beta}}|F(u)|_{\frac{4}{2+\mu_{2}-2\beta}}\leq (\frac{\varepsilon}{2}\|u\|^{2}+C(M,2\pi)|u|_{3}^{3})^{2},
    \end{aligned}
    \end{eqnarray*}
and
\begin{eqnarray*}
    \begin{aligned}\displaystyle
    \int_{\mathbb{R}^{2}}\Big(I_{\mu_{1}}\ast \frac{G(u)}{|x|^{\alpha}}\Big)\frac{G(u)}{|x|^{\alpha}}dx&\leq|G(u)|_{\frac{4}{2+\mu_{1}-2\alpha}}|G(u)|_{\frac{4}{2+\mu_{1}-2\alpha}}
    \leq (\frac{\varepsilon}{2}\|u\|^{2}+C(M,2\pi)|u|_{3}^{3})^{2}.
    \end{aligned}
    \end{eqnarray*}
Together with the continuous embedding $H^{1}_{V}(\mathbb{R}^{2})\hookrightarrow L^{3}(\mathbb{R}^{2})$, taking $\sigma>0$, we obtain
 \begin{eqnarray*}
    \begin{aligned}\displaystyle
      \Phi(z)\geq (\frac{1}{2}-\frac{C\varepsilon}{2})\|u\|^{2}-C\|u\|^{6}\geq\sigma>0.
    \end{aligned}
 \end{eqnarray*}
 If we take $\rho$ to be sufficiently small, the the proof of the lemma is finished.
\end{proof}
Let $e\in H^{1}(\mathbb{R}^{2})\backslash \{0\}$ with $\|e\|=1$ and
\begin{eqnarray*}
    \begin{aligned}\displaystyle
Q=\{r(e,e)+(w,-w):w\in H^{1}(\mathbb{R}^{2}), \|w\|\leq R_{0} \ and \ 0\leq r\leq R_{1}\},
    \end{aligned}
\end{eqnarray*}
where $R_{0},R_{1}>\rho$ are defined in the Lemma $\ref{LEce}$.

\begin{lemma}\label{LEce}
Suppose that $(V_{1})-(V_{2}), (H_{0}^{\prime})-(H_{1}^{\prime})$ hold. Then there exist positive constants $R_{0}$, $R_{1}>\rho$ such that $\sup\Phi(\partial \mathcal{Q})\leq 0$.
\end{lemma}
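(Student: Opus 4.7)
The plan is to decompose $\partial Q$ into three faces and verify $\Phi\le 0$ on each separately. For $z=r(e,e)+(w,-w)\in Q$ set $u=re+w$ and $v=re-w$; using $\|e\|=1$ and the $\langle\cdot,\cdot\rangle_E$-orthogonality of $(e,e)$ and $(w,-w)$, one computes $\langle u,v\rangle=r^2-\|w\|^2$. Since $F,G\ge 0$ by $(H_0')$, the two Choquard double integrals
$$
\mathcal N(u):=\int_{\mathbb R^2}\Bigl(I_{\mu_2}\ast\tfrac{F(u)}{|x|^\beta}\Bigr)\tfrac{F(u)}{|x|^\beta}\,dx,\qquad
\mathcal M(v):=\int_{\mathbb R^2}\Bigl(I_{\mu_1}\ast\tfrac{G(v)}{|x|^\alpha}\Bigr)\tfrac{G(v)}{|x|^\alpha}\,dx
$$
are nonnegative, so throughout we will use
$
\Phi(z)\le r^2-\|w\|^2-\tfrac12\bigl(\mathcal N(u)+\mathcal M(v)\bigr).
$

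On the face $\{r=0\}$ we get $\Phi(z)\le-\|w\|^2\le 0$ immediately. On the lateral face $\{\|w\|=R_0\}$ we have $\Phi(z)\le r^2-R_0^2\le R_1^2-R_0^2$, which is nonpositive as soon as we insist $R_0\ge R_1$. The essential case is the top face $\{r=R_1\}$, and this is where I expect the main obstacle: we must produce a lower bound on $\mathcal N(u)+\mathcal M(v)$ that (i) is uniform in $w$ over the whole ball $\{\|w\|\le R_0\}\subset H^1_V(\mathbb R^2)$, and (ii) grows strictly faster than $R_1^2$ as $R_1\to\infty$.

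To achieve this I will choose $e\in C^\infty_c(\mathbb R^2\setminus\{0\})$ nonnegative with $\|e\|=1$ and $e\ge c_0>0$ on a ball $B\subset\mathbb R^2\setminus\{0\}$, keeping the freedom to enlarge $|B|$. Integrating $(H_2')$ yields $F(s),G(s)\ge c_1 s^{\theta}$ for all $s\ge s_0$. Given $w$ with $\|w\|\le R_0$, Chebyshev's inequality together with the continuous embedding $H^1_V(\mathbb R^2)\hookrightarrow L^\theta(\mathbb R^2)$ gives
$$
\bigl|\{x\in B:|w(x)|>R_1c_0/2\}\bigr|\le \bigl(2C_\theta R_0/(R_1 c_0)\bigr)^{\theta}.
$$
Taking $R_0=R_1=:R$ makes the right-hand side a constant independent of $R$; fixing $e$ so that $|B|$ exceeds twice this constant guarantees that the good set $B':=\{x\in B:|w(x)|\le Rc_0/2\}$ has $|B'|\ge|B|/2$ uniformly in $w$. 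On $B'$ we have $u\ge Rc_0/2$ and $v\ge Rc_0/2$; taking $R$ large enough that $Rc_0/2\ge s_0$, the AR lower bound gives $F(u),G(v)\ge c_1(Rc_0/2)^\theta$ there. Since $B'\subset B$ is bounded and away from $0$, both $|x|^{-\beta}$ (resp.\ $|x|^{-\alpha}$) and $I_{\mu_i}(x-y)=A_{\mu_i}|x-y|^{\mu_i-2}$ are bounded below by positive constants on $B'\times B'$. Restricting the double integrals to $B'\times B'$ thus yields $\mathcal N(u),\mathcal M(v)\ge c_2R^{2\theta}|B'|^2\ge c_3 R^{2\theta}$ with $c_3$ independent of $w$.

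Since $\theta>2$, for all $w$ with $\|w\|\le R$ we obtain
$$
\Phi(z)\le R^2-c_3R^{2\theta}\longrightarrow -\infty\quad\text{as }R\to\infty.
$$
It therefore suffices to take $R_0=R_1=:R$ large enough to exceed $\rho$, to ensure $Rc_0/2\ge s_0$, and to make $c_3R^{2\theta}\ge 2R^2$; the three faces then all yield $\Phi\le 0$, completing the proof. The delicate point, as above, is the Chebyshev/uniformity step on Face C, which is what forces the specific choice of $e$ with sufficiently large support and the coupling $R_0=R_1$.
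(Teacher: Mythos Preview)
Your argument on the top face $\{r=R_1\}$ has a genuine gap. You claim that by choosing $e$ with $|B|$ large enough the Chebyshev bound guarantees $|B'|\ge|B|/2$ uniformly over $\|w\|\le R_0=R_1=:R$. But this requirement is impossible to meet. Apply Chebyshev to $e$ itself: since $\|e\|=1$ and $e\ge c_0$ on $B$, one has $|B|\le|\{e\ge c_0\}|\le(C_\theta/c_0)^\theta$, which directly contradicts your demand $|B|>2(2C_\theta/c_0)^\theta$. Concretely, take $w=(1-\epsilon)Re$ with $\epsilon\in(0,\tfrac12)$; then $\|w\|<R$, yet on $B$ we have $|w|=(1-\epsilon)Re\ge(1-\epsilon)Rc_0>Rc_0/2$, so $B'=\emptyset$. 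The problem is structural: you try to force \emph{both} $u=Re+w$ and $v=Re-w$ to be large on a common set, but $w$ can sit near $\pm Re$ and annihilate one of them.

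The fix is to use the two nonlinear terms jointly rather than separately. On $B$ one has $u+v=2Re\ge 2Rc_0$, so $\max(u,v)\ge Rc_0$ pointwise; hence $B=B_u\cup B_v$ with $B_u:=\{x\in B:u(x)\ge Rc_0\}$ and $B_v:=\{x\in B:v(x)\ge Rc_0\}$, and one of these has measure at least $|B|/2$. Restricting the corresponding double integral to that set and using $F(s),G(s)\ge c_1 s^{\theta}$ for $s\ge s_0$ (from $(H_2')$, which both you and the paper use despite the lemma listing only $(H_0')$--$(H_1')$) gives $\mathcal N(u)+\mathcal M(v)\ge c_3 R^{2\theta}$ with $c_3$ independent of $w$; your conclusion then follows for $R$ large. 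The paper reaches the same endpoint by a different route, differentiating the Choquard functional $t\mapsto H(t\phi)$ along rays and using $(H_2')$ to get $H(R_1e+u)\ge CR_1^{\theta}$; your pointwise approach, once repaired, is arguably more transparent.
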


\begin{proof}
Notice that the boundary $\partial Q$ of the set $Q$ is composed of three parts.

$(i)$ If $\omega\in \partial Q \cap E^{-}$, then $\omega=(u,u)\in E^{-}$, hence
\begin{eqnarray*}
    \begin{aligned}\displaystyle
\Phi(w)= -\|u\|^{2}-\frac{1}{2}\int_{\mathbb{R}^{2}}\Big(I_{\mu_{2}}\ast \frac{F(u)}{|x|^{\beta}}\Big)\frac{F(u)}{|x|^{\beta}}dx-\frac{1}{2}\int_{\mathbb{R}^{2}}\Big(I_{\mu_{1}}\ast \frac{G(-u)}{|x|^{\alpha}}\Big)\frac{G(-u)}{|x|^{\alpha}}dx \leq 0,
    \end{aligned}
\end{eqnarray*}
because $F,G$ are nonnegative functions.

$(ii)$ Suppose that $\omega=r(e,e)+(u,-u)\in\partial Q$ with $\|(u,-u)\|_{E}=R_{0}$ and $0\leq r\leq R_{1}$, we obtain
\begin{eqnarray*}
    \begin{aligned}\displaystyle
\Phi(\omega)=& r^{2}\|e\|^{2}-\|u\|^{2}-\frac{1}{2}\int_{\mathbb{R}^{2}}\Big(I_{\mu_{2}}\ast \frac{F(re+u)}{|x|^{\beta}}\Big)\frac{F(re+u)}{|x|^{\beta}}dx\\
&-\frac{1}{2}\int_{\mathbb{R}^{2}}\Big(I_{\mu_{1}}\ast \frac{G(re-u)}{|x|^{\alpha}}\Big)\frac{G(re-u)}{|x|^{\alpha}}dx
\leq R_{1}^{2}-\frac{1}{2}R_{0}^{2}.
    \end{aligned}
\end{eqnarray*}
Hence $\Phi(\omega)\leq0$ provided that $\sqrt{2}R_{1}\leq R_{0}$.

$(iii)$ If $\omega = R_{1}(e,e)+(u,-u)\in\partial Q$, with $\|(u,-u)\|_{E}\leq R_{0}$ for $R_{0}$ given by case $(ii)$. In this case,
\begin{eqnarray*}
    \begin{aligned}\displaystyle
    \Phi (\omega)=& R_{1}\|e\|^{2}-\|u\|^{2}-\frac{1}{2}\int_{\mathbb{R}^{2}}\Big(I_{\mu_{2}}\ast \frac{F(R_{1}e+u)}{|x|^{\beta}}\Big)\frac{F(R_{1}e+u)}{|x|^{\beta}}dx\\
    &-\frac{1}{2}\int_{\mathbb{R}^{2}}\Big(I_{\mu_{1}}\ast \frac{G(R_{1}e-u)}{|x|^{\alpha}}\Big)\frac{G(R_{1}e-u)}{|x|^{\alpha}}dx.
    \end{aligned}
\end{eqnarray*}

Define
\begin{eqnarray*}
    \begin{aligned}\displaystyle
    A(z)=\int_{\mathbb{R}^{2}}\Big(I_{\mu_{2}}\ast \frac{F(z)}{|x|^{\beta}}\Big)\frac{F(z)}{|x|^{\beta}}dx,
    \end{aligned}
\end{eqnarray*}
and
\begin{eqnarray*}
    \begin{aligned}\displaystyle
    s(t)= H\Big(t\frac{u/R_{1}+e}{\|u\|+\|e\|}\Big).
    \end{aligned}
\end{eqnarray*}
As a same calculation given by Maia in \cite{MMA}, we get
\begin{eqnarray*}
    \begin{aligned}\displaystyle
    \frac{s^{\prime}(t)}{s(t)}\geq\frac{\theta}{t},
    \end{aligned}
\end{eqnarray*}
where $\theta$ is defined in $(H_{2}^{\prime})$, which implies
\begin{eqnarray*}
    \begin{aligned}\displaystyle
    H(u+R_{1}e)=s(R_{1}(\|u\|+\|e\|))\geq C[R_{1}(\|u\|+\|e\|)]^{\theta}.
    \end{aligned}
\end{eqnarray*}
Then, we obtain
\begin{eqnarray*}
    \begin{aligned}\displaystyle
    \Phi (\omega)\leq R_{1}^{2}\|e\|^{2}-\frac{1}{2}H(u+R_{1}e)&\leq R_{1}^{2}\|e\|^{2}-C R_{1}^{\theta}\|y\|^{\theta}\leq R_{1}^{2}-C R_{1}^{\theta}.
    \end{aligned}
\end{eqnarray*}
Since $\theta>2$, taking $R_{1}$ sufficiently large, we obtain $\Phi(\omega)\leq0$.
\end{proof}

\section{{\bfseries Finite dimensional approximation}}\label{FIn}

Since the functional $\Phi$ is strongly indefinite and defined in an infinite dimensional space, standard linking theorems are not applicable. We therefore approximate system $(\ref{a})$ with a sequence of finite dimensional problems (Galerkin approximation procedure).

Associated with the eigenvalues $0<\lambda_{1}<\lambda_{2}<\lambda_{3}<\cdot\cdot\cdot<\lambda_{j} \rightarrow +\infty$ of $(-\Delta +V(x),H^{1}_{V})$, there exists an orthonormal basis $\{\phi_{1},\phi_{2},...\}$ of corresponding eigenfunctions in $H^{1}_{V}$. We set,
\begin{eqnarray*}
    \begin{aligned}\displaystyle
    &E_{n}^{+}= \mbox{span} \{(\phi_{i},\phi_{i})|i=1,...,n\}\\
    &E_{n}^{-}= \mbox{span} \{(\phi_{i},\phi_{-i})|i=1,...,n\}\\
    &E_{n}=E_{n}^{+}\oplus E_{n}^{-}.
    \end{aligned}
\end{eqnarray*}
Let $y\in H^{1}_{V}$ be a fixed nonnegative function and
\begin{eqnarray*}
    \begin{aligned}\displaystyle
Q_{n,y}=\{r(y,y)+w:w\in E_{n}^{-}, \|w\|\leq R_{0} \ and \ 0\leq r\leq R_{1}\},
\end{aligned}
\end{eqnarray*}
where $R_{0},R_{1}$ are given in Lemma $\ref{LEce}$. We recall that these constants depend of $y$ only. We use the following notation:
\begin{eqnarray*}
    \begin{aligned}\displaystyle
H_{n,y}=\mathbb{R}(y,y)\oplus E_{n}, \ \ H_{n,y}^{+}=\mathbb{R}(y,y)\oplus E_{n}^{+}, \ \ H_{n,y}^{-}=\mathbb{R}(y,y)\oplus E_{n}^{-}.
    \end{aligned}
\end{eqnarray*}
Furthermore, define the class of mappings
\begin{eqnarray*}
    \begin{aligned}\displaystyle
\Gamma_{n,y}=\{h\in C(Q_{n,y},H_{n,y}): h(z)=z \ on \ \partial Q_{n,y}\}
    \end{aligned}
\end{eqnarray*}
and set
\begin{eqnarray*}
    \begin{aligned}\displaystyle
    c_{n,y}=\inf\limits_{h\in \Gamma_{n,y}} \max\limits_{z\in Q_{n,y}} \Phi(h(z)).
    \end{aligned}
\end{eqnarray*}

Using an intersection theorem (see \cite[Proposition 5.9]{Rabi}), we obtain
\begin{eqnarray*}
    \begin{aligned}\displaystyle
    h(Q_{n,y})\cap (\partial B_{\rho}\cap E^{+})\neq \emptyset, \ \forall h\in \Gamma_{n,y},
    \end{aligned}
\end{eqnarray*}
which in combination with Lemma $\ref{LEcd}$ implies that $c_{n,y}\geq \sigma>0$. On the other hand, since the identity mapping $Id:Q_{n,y}\rightarrow H_{n,y}$ belongs to $\Gamma_{n,y}$, we have for $z=r(y,y)+(u,-u)\in Q_{n,y}$ that
\begin{eqnarray*}
    \begin{aligned}\displaystyle
    \Phi(z)&=r^{2}\|y\|^{2}-\|u\|^{2}\\
    &-\frac{1}{2}\int_{\mathbb{R}^{2}}\Big(I_{\mu_{2}}\ast \frac{F(ry+u)}{|x|^{\beta}}\Big)\frac{F(ry+u)}{|x|^{\beta}}dx-\frac{1}{2}\int_{\mathbb{R}^{2}}\Big(I_{\mu_{1}}\ast \frac{G(ry-u)}{|x|^{\alpha}}\Big)\frac{G(ry-u)}{|x|^{\alpha}}dx\\
    &\leq r^{2}\|y\|^{2}\leq R_{1}^{2}.
    \end{aligned}
\end{eqnarray*}
Then we have
\begin{eqnarray*}
    \begin{aligned}\displaystyle
    0< \sigma \leq c_{n,y} \leq R_{1}^{2}.
    \end{aligned}
\end{eqnarray*}
We remark that the upper bound does not depend of $n$, but it depends on $y$.

Let us denote by $\Phi_{n,y}$ the functional $\Phi$ restricted to the finite dimensional subspace $H_{n,y}$. Therefore, in view of Lemmas $\ref{LEcd}$ and $\ref{LEce}$, we see that the geometry of a linking theorem holds for the functional $\Phi_{n,y}$. So, applying the linking theorem for $\Phi_{n,y}$ (see Theorem $5.3$ in \cite{Rabi}), we obtain a (PS)$-$sequence, which is bounded in view of Lemma $\ref{LEcb}$. Finally, using the fact that $H_{n,y}$ is a finite dimensional space, we get the main result of this section.
\begin{proposition}\label{PRda}
For each $n\in \mathbb{N}$ and for each $y\in H^{1}$, a fixed nonnegative function, the functional $\Phi_{n,y}$ has a critical point at level $c_{n,y}$. More precisely, there is a $z_{n,y}\in H_{n,y}$ such that
\begin{eqnarray*}
    \begin{aligned}\displaystyle
    \Phi_{n,y}(z_{n,y})=c_{n,y}\in [\sigma, R_{1}^{2}] \ \ and \ \ (\Phi_{n,y})^{\prime}(z_{n,y})=0.
    \end{aligned}
\end{eqnarray*}
Furthermore, $\|z_{n,y}\|\leq C$ where $C$ does not depend of $n$.
\end{proposition}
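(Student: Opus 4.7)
The plan is to verify that the functional $\Phi_{n,y}$, restricted to the finite dimensional space $H_{n,y}=\mathbb{R}(y,y)\oplus E_n$, satisfies the hypotheses of Rabinowitz's linking theorem, produce a Palais--Smale sequence at the minimax level $c_{n,y}$, use Lemma \ref{LEcb} to bound it, and finally exploit the finite dimension to pass to a convergent subsequence. The uniform bound in $n$ will then drop out of the estimate in Lemma \ref{LEcb} together with the inequality $c_{n,y}\leq R_1^2$ already observed, since $R_1$ depends only on $y$.

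Concretely, I would first check the two geometric ingredients. The set $\partial B_\rho\cap E^+$ links with $\partial Q_{n,y}$ in the sense of \cite[Prop.~5.9]{Rabi}: because $H_{n,y}^+\supset\mathbb{R}(y,y)$ and the $E^-$ block of $Q_{n,y}$ lies in $H_{n,y}^-$, every $h\in\Gamma_{n,y}$ must meet the sphere $\partial B_\rho\cap E^+$. Combined with Lemma \ref{LEcd}, this gives $c_{n,y}\geq\sigma>0$. The upper bound $c_{n,y}\leq R_1^2$ is obtained by evaluating the identity map, exactly as in the discussion preceding the statement, and uses $F,G\geq 0$. Lemma \ref{LEce} furnishes the remaining piece of the linking geometry, $\sup\Phi(\partial Q_{n,y})\leq 0$.

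Having checked the geometry, Theorem 5.3 of \cite{Rabi} applied to $\Phi_{n,y}:H_{n,y}\to\mathbb{R}$ produces a sequence $(w_k)\subset H_{n,y}$ such that
\begin{eqnarray*}
\Phi_{n,y}(w_k)\to c_{n,y},\qquad (\Phi_{n,y})'(w_k)\to 0\ \ \mbox{in}\ \ H_{n,y}^*.
\end{eqnarray*}
Since every $(\varphi,\psi)\in H_{n,y}\subset E$ is an admissible test pair, the proof of Lemma \ref{LEcb} applies verbatim to $(w_k)$, yielding the uniform estimate
\begin{eqnarray*}
\|w_k\|_E\leq \frac{\theta}{\theta-1}(2c_{n,y}+\varepsilon_k)\leq \frac{2\theta}{\theta-1}R_1^2+o(1).
\end{eqnarray*}
Here the crucial point is that $R_1$ was chosen in Lemma \ref{LEce} depending on $y$ alone, so this bound is independent of $n$.

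At this stage the problem becomes finite dimensional: $(w_k)$ is bounded in the finite dimensional space $H_{n,y}$, hence, up to a subsequence, $w_k\to z_{n,y}$ strongly. Continuity of $\Phi$ and $\Phi'$ on $H_{n,y}$ forces $\Phi_{n,y}(z_{n,y})=c_{n,y}$ and $(\Phi_{n,y})'(z_{n,y})=0$, and passing to the limit in the preceding estimate yields $\|z_{n,y}\|_E\leq \frac{2\theta}{\theta-1}R_1^2=:C$ with $C$ independent of $n$. The only subtle point in the whole argument is ensuring that the linking intersection really holds inside $H_{n,y}$: this is why the set $Q_{n,y}$ has been built so that its $(y,y)$-direction sits in $H_{n,y}^+$ and its boundary in $E^-$ sits in $H_{n,y}^-$, so that the abstract linking lemma of Rabinowitz applies without modification.
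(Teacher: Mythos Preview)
Your proposal is correct and follows essentially the same approach as the paper: verify the linking geometry via Lemmas \ref{LEcd} and \ref{LEce}, invoke Rabinowitz's linking theorem (Theorem 5.3 in \cite{Rabi}) on the finite dimensional space $H_{n,y}$ to obtain a (PS) sequence at level $c_{n,y}$, bound it by Lemma \ref{LEcb}, and use finite dimensionality to extract a strongly convergent subsequence. Your write-up is in fact more explicit than the paper's, which merely sketches these steps in the paragraph preceding the proposition; in particular, your observation that the bound coming from Lemma \ref{LEcb} depends only on $c_{n,y}\le R_1^2$ (hence only on $y$) is exactly the mechanism the paper has in mind for the $n$-independence of $C$.
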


\section{{\bfseries Proof of Theorem 1.1.}}\label{Subp}
In this section we assume that $g$ has subcritical growth and $f$ has subcritical or critical growth.
{\bf Proof of Theorem} $\ref{THa}$. Applying Proposition $\ref{PRda}$ we have a sequence $z_{n}:=z_{n,y}=(u_{n},v_{n})\in H_{n,y}$ such that
\begin{equation}\label{PRsa}
    \begin{aligned}\displaystyle
    &\|(u_{n},v_{n})\|_{E}\leq C,\\
    &\Phi_{n,y}(u_{n},v_{n})=c_{n}:=c_{n,y}\in [\sigma,R_{1}],\\
    &(\Phi_{n,y})^{\prime}(u_{n},v_{n})=0,\\
    &(u_{n},v_{n})\rightharpoonup (u_{0},v_{0}) \ \ \mbox{in} \ \ E.
    \end{aligned}
\end{equation}
By $(u_{n},v_{n})$ is bounded in $E$, there exists $(u_{0},v_{0})$ such that $(u_{n},v_{n})\rightharpoonup (u_{0},v_{0})$.
Then Taking $(0,\psi)$ and $(\varphi,0)$ as text function in $(\ref{PRsa})$, we get
\begin{eqnarray*}
    \begin{aligned}\displaystyle
    &\int_{\mathbb{R}^{2}}\nabla u_{n}\nabla \psi +V(x)u_{n}\psi dx = \int_{\mathbb{R}^{2}}\Big(I_{\mu_{1}}\ast \frac{G(v_{n})}{|x|^{\alpha}}\Big)\frac{g(v_{n})\psi}{|x|^{\alpha}}dx \ \ \forall \psi \in E_{n}, \\
    &\int_{\mathbb{R}^{2}}\nabla v_{n}\nabla \varphi +V(x)v_{n}\varphi dx = \int_{\mathbb{R}^{2}}\Big(I_{\mu_{2}}\ast \frac{F(u_{n})}{|x|^{\beta}}\Big)\frac{f(u_{n})\varphi}{|x|^{\beta}}dx \ \ \forall \varphi \in E_{n},
    \end{aligned}
\end{eqnarray*}
where $E_{n}:=span\{\phi_{i}|i=1,...,n\}$. By Lemma $\ref{LEcb}$, we have
\begin{eqnarray*}
    \begin{aligned}\displaystyle
    &\lim\limits_{n\rightarrow\infty}\int_{\mathbb{R}^{2}}\Big(I_{\mu_{2}}\ast \frac{F(u_{n})}{|x|^{\beta}}\Big)\frac{f(u_{n})\varphi}{|x|^{\beta}}dx=\int_{\mathbb{R}^{2}}\Big(I_{\mu_{2}}\ast \frac{F(u_{0})}{|x|^{\beta}}\Big)\frac{f(u_{0})\varphi}{|x|^{\beta}}dx,\\
    &\lim\limits_{n\rightarrow\infty}\int_{\mathbb{R}^{2}}\Big(I_{\mu_{1}}\ast \frac{G(v_{n})}{|x|^{\alpha}}\Big)\frac{g(v_{n})\psi}{|x|^{\alpha}}dx=\int_{\mathbb{R}^{2}}\Big(I_{\mu_{1}}\ast \frac{G(v_{0})}{|x|^{\alpha}}\Big)\frac{g(v_{0})\psi}{|x|^{\alpha}}dx.
    \end{aligned}
\end{eqnarray*}
Thus, using the fact that $\cup_{n\in \mathbb{N}}E_{n}$ is dense in $E$, together with Lemma $\ref{LEcc}$, we can obtain that
\begin{equation}\label{PRsb}
    \begin{aligned}\displaystyle
    &\int_{\mathbb{R}^{2}}\nabla u_{0}\nabla \psi +V(x)u_{0}\psi dx = \int_{\mathbb{R}^{2}}\Big(I_{\mu_{1}}\ast \frac{G(v_{0})}{|x|^{\alpha}}\Big)\frac{g(v_{0})\psi}{|x|^{\alpha}}dx \ \ \forall \psi \in E, \\
    &\int_{\mathbb{R}^{2}}\nabla v_{0}\nabla \varphi +V(x)v_{0}\varphi dx = \int_{\mathbb{R}^{2}}\Big(I_{\mu_{2}}\ast \frac{F(u_{0})}{|x|^{\beta}}\Big)\frac{f(u_{0})\varphi}{|x|^{\beta}}dx \ \ \forall \varphi \in E.
    \end{aligned}
\end{equation}
Therefore, we conclude that $\Phi^{\prime}(u_{0},v_{0})=0$ in E, then $(u_{0},v_{0})$ is a critical point of $\Phi$. Then for $\forall \varphi \in C^{\infty}_{0}(\mathbb{R}^{2})$, we know that
\begin{equation}\label{PRsc}
    \begin{aligned}\displaystyle
    \langle u_{0}, \psi\rangle + \langle v_{0}, \varphi\rangle = &\int_{\mathbb{R}^{2}}\Big(I_{\mu_{2}}\ast \frac{F(u_{0})}{|x|^{\beta}}\Big)\frac{f(u_{0})\varphi}{|x|^{\beta}}dx+\int_{\mathbb{R}^{2}}\Big(I_{\mu_{1}}\ast \frac{G(v_{0})}{|x|^{\alpha}}\Big)\frac{g(v_{0})\psi}{|x|^{\alpha}}dx.
    \end{aligned}
\end{equation}

Now, it remains to prove that $u_{0}$, $v_{0}\neq 0$. Assume that $u_{0}\equiv 0$, then by $(\ref{PRsb})$ we know that $v_{0}=0$. Thus the proof of this theorem is divided into two steps.

{\em Step $1.$} $(u_{n},v_{n})\rightarrow (u_{0},v_{0})=(0,0)$, that is, $\|u_{n}\|,\|v_{n}\|\rightarrow 0$.

By the Cauchy-Schwarz inequality, we obtain
\begin{eqnarray*}
    \begin{aligned}\displaystyle
    \lim\limits_{n\rightarrow \infty}\int_{\mathbb{R}^{2}}(\nabla u_{n}\nabla v_{n}+ V(x)u_{n}v_{n})\rightarrow 0,
    \end{aligned}
\end{eqnarray*}
which implies that
\begin{eqnarray*}
    \begin{aligned}\displaystyle
    \int_{\mathbb{R}^{2}}\Big(I_{\mu_{2}}\ast \frac{F(u_{n})}{|x|^{\beta}}\Big)\frac{f(u_{n})u_{n}}{|x|^{\beta}}dx\rightarrow 0 \ and \ \int_{\mathbb{R}^{2}}\Big(I_{\mu_{1}}\ast \frac{G(v_{n})}{|x|^{\alpha}}\Big)\frac{g(v_{n})v_{n}}{|x|^{\alpha}}dx\rightarrow 0.
    \end{aligned}
\end{eqnarray*}
This convergence together with $(H_{3}^{\prime})$ we get that
\begin{eqnarray*}
    \begin{aligned}\displaystyle
    \int_{\mathbb{R}^{2}}\Big(I_{\mu_{2}}\ast \frac{F(u_{n})}{|x|^{\beta}}\Big)\frac{F(u_{n})}{|x|^{\beta}}dx\rightarrow 0 \ and \ \int_{\mathbb{R}^{2}}\Big(I_{\mu_{1}}\ast \frac{G(v_{n})}{|x|^{\alpha}}\Big)\frac{G(v_{n})}{|x|^{\alpha}}dx\rightarrow 0.
    \end{aligned}
\end{eqnarray*}

Then, the last convergence shows us that $c_{n}=0$, a contradiction. Hence, this case cannot occur.

{\em Step $2.$} $(u_{n},v_{n})$ converges weakly to $(u_{0},v_{0})$ in $E$ but does not converge strongly. In other words, $(u_{n},v_{n})\rightharpoonup (0,0)$ in $E$, and there is a constant $a>0$ such that $\liminf\limits_{n\rightarrow\infty}\|u_{n}\|\geq a$ and $\liminf\limits_{n\rightarrow\infty}\|v_{n}\|\geq a$.

Taking $(0,u_{n}),(v_{n},0)$ as test function in $(\ref{PRsa})$, we obtain that
\begin{eqnarray*}
    \begin{aligned}\displaystyle
    \|u_{n}\|^{2}=\int_{\mathbb{R}^{2}}\Big(I_{\mu_{2}}\ast \frac{G(v_{n})}{|x|^{\beta}}\Big)\frac{g(v_{n})u_{n}}{|x|^{\beta}}dx,\quad \mbox{and}\ \
    \|v_{n}\|^{2}=\int_{\mathbb{R}^{2}}\Big(I_{\mu_{1}}\ast \frac{F(u_{n})}{|x|^{\alpha}}\Big)\frac{f(u_{n})v_{n}}{|x|^{\alpha}}dx.
    \end{aligned}
\end{eqnarray*}

Since $g$ is subcritical, we get for all $\gamma>0$,
\begin{equation}\label{PRsd}
    \begin{aligned}\displaystyle
    |g(s)|\leq C_{1}s+C_{2}e^{\gamma s^{2}} \ \ \forall s\in \mathbb{R},
    \end{aligned}
\end{equation}
where $C_{1},C_{2}>0$. Then, using the Hardy-Littlewood-Sobolev and H$\ddot{o}$lder inequalities, we have
\begin{eqnarray*}
    \begin{aligned}\displaystyle
\int_{\mathbb{R}^{2}}|\nabla u_{n}|^{2} +V(x)u^{2} dx = &\int_{\mathbb{R}^{2}}\Big(I_{\mu_{1}}\ast \frac{G(v_{n})}{|x|^{\alpha}}\Big)\frac{g(v_{n})u_{n}}{|x|^{\alpha}}dx\\
\leq& C|G(v_{n})|_{\frac{4}{2+\mu_{1}-2\alpha}} |g(v_{n})u_{n}|_{\frac{4}{2+\mu_{1}-2\alpha}}\\
\leq& C_{1}|v_{n}|_{\frac{8}{2+\mu_{1}-2\alpha}}|u_{n}|_{\frac{8}{2+\mu_{1}-2\alpha}}\\
&+C_{2} |u_{n}|_{\frac{4t^{\prime}}{2+\mu_{1}-2\alpha}}\Big(\int_{\mathbb{R}^{2}}e^{\frac{4\gamma t}{2+\mu_{1}-2\alpha}\|v_{n}\|^{2}(\frac{v_{n}^{2}}{\|v_{n}\|^{2}})}dx\Big)^{\frac{2+\mu_{1}-2\alpha}{4t}},
    \end{aligned}
\end{eqnarray*}
where $t,t^{\prime}>1$ satisfying $\frac{1}{t}+\frac{1}{t^{\prime}}=1$. Together with Trudinger-Moser inequality with $(\ref{PRsd})$, we deduce that
\begin{eqnarray*}
    \begin{aligned}\displaystyle
\Big(\int_{\mathbb{R}^{2}}e^{\frac{4\gamma t}{2+\mu_{1}-2\alpha}\|v_{n}\|^{2}(\frac{v_{n}^{2}}{\|v_{n}\|^{2}})}dx\Big)^{\frac{2+\mu_{1}-2\alpha}{4t}}\leq C_{2},
    \end{aligned}
\end{eqnarray*}
which implies that
\begin{eqnarray*}
    \begin{aligned}\displaystyle
    \int_{\mathbb{R}^{2}}\Big(I_{\mu_{1}}\ast \frac{G(v_{n})}{|x|^{\alpha}}\Big)\frac{g(v_{n})u_{n}}{|x|^{\alpha}}dx \leq C_{1}|v_{n}|_{\frac{8}{2+\mu_{1}-2\alpha}}|u_{n}|_{\frac{8}{2+\mu_{1}-2\alpha}}+ C_{3}|u_{n}|_{\frac{4t^{\prime}}{2+\mu_{1}-2\alpha}}.
    \end{aligned}
\end{eqnarray*}

Using the compact embedding $H^{1}(\mathbb{R}^{2})\hookrightarrow L^{q}(\mathbb{R}^{2})$ for $1\leq q\leq \infty$. Thus, we obtain that
\begin{eqnarray*}
    \begin{aligned}\displaystyle
    \int_{\mathbb{R}^{2}}\Big(I_{\mu_{1}}\ast \frac{G(v_{n})}{|x|^{\alpha}}\Big)\frac{g(v_{n})u_{n}}{|x|^{\alpha}}dx \rightarrow 0 \ \ n\rightarrow \ +\infty.
    \end{aligned}
\end{eqnarray*}
Repeating the same arguments, we obtain that
\begin{eqnarray*}
    \begin{aligned}\displaystyle
    \int_{\mathbb{R}^{2}}\Big(I_{\mu_{2}}\ast \frac{F(u_{n})}{|x|^{\beta}}\Big)\frac{f(u_{n})v_{n}}{|x|^{\beta}}dx \rightarrow 0 \ \ n\rightarrow \ +\infty.
    \end{aligned}
\end{eqnarray*}
Then repeating the same arguments in step $1.$, we get that
\begin{eqnarray*}
    \begin{aligned}\displaystyle
    \int_{\mathbb{R}^{2}}\Big(I_{\mu_{2}}\ast \frac{F(u_{n})}{|x|^{\beta}}\Big)\frac{F(u_{n})}{|x|^{\beta}}dx \rightarrow 0 \ \ and \ \
    \int_{\mathbb{R}^{2}}\Big(I_{\mu_{1}}\ast \frac{G(v_{n})}{|x|^{\alpha}}\Big)\frac{G(v_{n})}{|x|^{\alpha}}dx \rightarrow 0.
    \end{aligned}
\end{eqnarray*}
Thus, we conclude that $c_{n,y}\rightarrow 0$, a contradiction. Consequently, we get a nontrivial weak solution $(u_{0},v_{0})$ of $(\ref{a})$. Then choosing $\psi=u_{0}^{-}=\{-u,0\}$ and $\varphi=0$ in $(\ref{PRsc})$, we obtain
\begin{eqnarray*}
    \begin{aligned}\displaystyle
    -\|u_{0}^{-}\|^{2}=\int_{\mathbb{R}^{2}}\Big(I_{\mu_{1}}\ast\frac{G(v_{0})}{|x|^{\alpha}}\Big)\frac{g(v_{0})u_{0}^{-}}{|x|^{\alpha}}\geq0,
    \end{aligned}
\end{eqnarray*}
which implies that $u_{0}^{-}=0$. Similarly, we deduce $v_{0}^{-}=0$, thus $u_{0},v_{0}\geq0$. If $u_{0}=0$, it is obvious to obtain $v_{0}=0$, allowing us to conclude that $u_{0},v_{0}>0$. Thus, the proof is complete.
\qed\\

\section{{\bfseries The estimates for the critical level}}\label{TECL}
In this section, we assume that $f$ and $g$ have critical growth with exponent critical $\gamma_{0}$, $\alpha=\beta$ and $\mu_{1}=\mu_{2}$. Then together Trudinger-Moser inequality with $(H_{4}^{\prime})$, we obtain an upper bound for the minimax level. In order to do this, we prove the following result in which we combine the Morser type functions and an approximation argument inspired by Tang et al. \cite{QTZA}.

Let us introduce the following Moser type functions supported in $B_{\rho}:=B_{\rho}(0)$ by
\begin{eqnarray*}
    \begin{aligned}\displaystyle
    w_{n}(x)=\frac{1}{\sqrt{2\pi}}
    \left\{ \arraycolsep=1.5pt
       \begin{array}{ll}
        \sqrt{\log n} \ \ \ & 0\leq |x|\leq \rho/n;\\[2mm]
        \frac{\log \frac{\rho}{|x|}}{\sqrt{\log n}} \ \ \ & \rho/n \leq|x|\leq \rho;\\[2mm]
        0,\ \ \ & |x|\geq \rho.
        \end{array}
    \right.
    \end{aligned}
\end{eqnarray*}
One has that
\begin{eqnarray*}
    \|\nabla w_{n}\|^{2}=\int_{\mathbb{R}^{2}}|\nabla w_{n}|^{2}dx =1,
\qquad \mbox{and}\ \ \
    \| w_{n}\|^{2}_{2}=\int_{\mathbb{R}^{2}}| w_{n}|^{2}dx=\delta_{n},
\end{eqnarray*}
where
\begin{equation}\label{ESTa}
    \begin{aligned}\displaystyle
    \delta_{n} =\rho^{2}\Big(\frac{1}{4\log n}-\frac{1}{4n^{2}\log n}-\frac{1}{2n^{2}}\Big)> 0.
    \end{aligned}
\end{equation}

Thanks to $(H_{4}^{\prime})$ we have for all $t \geq R_{\epsilon}$,\\
\begin{equation}\label{ESTb}
    \begin{aligned}\displaystyle
tF(t)\geq (\kappa-\varepsilon) e^{\gamma_{0}t^{2}}\ \ and \ \ tG(t)\geq (\kappa-\varepsilon) e^{\gamma_{0}t^{2}}.
    \end{aligned}
\end{equation}

\begin {lemma}\label{LEfa}Suppose that $(V_{1}),(V_{2}),(H_{0}^{\prime}),(H_{1}^{\prime})$ and $(H_{4}^{\prime})$ hold, then there exists $n_{0}\in  \mathbb{N}$ such that for all $n\geq n_{0}$ the corresponding Moser's function $w_{n}$ satisfies
\begin{equation}\label{ESTc}
    \begin{aligned}\displaystyle
    \sup\limits_{\mathbb{R^{+}}(w_{n},w_{n})\bigoplus \mathbb{E^{-}}} \Phi< \frac{(2-2\alpha+\mu_{1})\pi}{\gamma_{0}}.
    \end{aligned}
\end{equation}
\end{lemma}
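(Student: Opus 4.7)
I will argue by contradiction. Suppose the stated inequality fails along a subsequence, so there exist $t_{n} \geq 0$ and $h_{n} \in H^{1}_{V}(\mathbb{R}^{2})$ with
\[
\Phi(z_{n}) \geq C := \frac{(2-2\alpha+\mu_{1})\pi}{\gamma_{0}}, \qquad z_{n} = t_{n}(w_{n}, w_{n}) + (h_{n}, -h_{n}).
\]
Write $u_{n} = t_{n}w_{n} + h_{n}$, $v_{n} = t_{n}w_{n} - h_{n}$, and abbreviate the two nonnegative Choquard integrals as $\mathcal{F}(u) := \int (I_{\mu_{2}}\ast F(u)/|x|^{\beta})(F(u)/|x|^{\beta})\,dx$ and $\mathcal{G}(v)$ analogously. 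Then
\[
\Phi(z_{n}) = t_{n}^{2}\|w_{n}\|^{2} - \|h_{n}\|^{2} - \tfrac{1}{2}\mathcal{F}(u_{n}) - \tfrac{1}{2}\mathcal{G}(v_{n}).
\]

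First I would bound $t_{n}$ and identify its limit. From $F,G \geq 0$ we have $\|h_{n}\|^{2} \leq t_{n}^{2}\|w_{n}\|^{2} - C$. On $B_{\rho/n}$ we have $w_{n}(x) = \sqrt{\log n/(2\pi)}$, so $(t_{n}w_{n})^{2} = t_{n}^{2}\log n/(2\pi)$. Combining $(H_{4}^{\prime})$, which gives $F(s) \geq (\kappa-\varepsilon)e^{\gamma_{0}s^{2}}/s$ for $s$ large, with the crude bound $|x-y| \leq 2\rho/n$ and the identity $\int_{B_{\rho/n}} |x|^{-\alpha}dx = 2\pi(\rho/n)^{2-\alpha}/(2-\alpha)$, yields
\[
\mathcal{F}(u_{n}) \gtrsim \frac{A_{\mu_{1}}(\kappa-\varepsilon)^{2}\rho^{2-2\alpha+\mu_{1}}}{t_{n}^{2}\log n}\; n^{\gamma_{0}t_{n}^{2}/\pi + 2\alpha - \mu_{1} - 2}.
\]
If $t_{n} \to +\infty$, the exponent of $n$ diverges, and $\mathcal{F}(u_{n})$ blows up superpolynomially, contradicting $\Phi(z_{n}) \geq C$. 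Hence $t_{n}$ is bounded; up to a subsequence $t_{n} \to t_{\ast}$. Since $\|w_{n}\|^{2} = 1 + \int V w_{n}^{2}\,dx \to 1$, the inequality $t_{n}^{2}\|w_{n}\|^{2} \geq C$ gives $t_{\ast}^{2} \geq C$. Finiteness of $\mathcal{F}(u_{n})$ along the sequence then forces the exponent $\gamma_{0}t_{n}^{2}/\pi + 2\alpha - \mu_{1} - 2 \leq o(1)$, i.e. $t_{\ast}^{2} \leq C$, and thus $t_{\ast}^{2} = C$. In particular $\|h_{n}\|^{2} \leq t_{n}^{2}\|w_{n}\|^{2} - C \to 0$, so $h_{n} \to 0$ in $H^{1}_{V}$ and is asymptotically negligible against the dominant $t_{n}w_{n}$ on $B_{\rho/n}$.

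Second, I would carry out the sharp estimate at $t_{\ast}^{2} = C$. Both the correction $C(\|w_{n}\|^{2}-1) \leq C V_{\rho}\delta_{n}$ (with $\delta_{n}\log n \to \rho^{2}/4$) and a refined lower bound on $\tfrac{1}{2}(\mathcal{F}(u_{n}) + \mathcal{G}(v_{n}))$ are of order $1/\log n$. The constant
\[
K(\rho) = \sqrt{\frac{(2-2\alpha+\mu_{1})\mu_{1}(1+\mu_{1})(2+\mu_{1})}{8\pi A_{\mu_{1}}\rho^{2-2\alpha+\mu_{1}}\gamma_{0}^{2}}\, e^{V_{\rho}\rho^{2}(2-2\alpha+\mu_{1})/4 - 1}}
\]
in $(H_{4}^{\prime})$ is calibrated precisely so that $\kappa > K(\rho)$ guarantees the integral lower bound strictly exceeds the $V_{\rho}\delta_{n}$ correction. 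Choosing $\rho$ realizing $\kappa > K(\rho)$ (so that the Moser functions $w_{n}$ are built with this $\rho$) and plugging back into the expansion of $\Phi(z_{n})$, we obtain $\Phi(z_{n}) < C$ for $n$ large, contradicting the assumption.

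The main obstacle will be the matching of constants in the second step. The crude spatial estimate $|x-y| \leq 2\rho/n$ must be replaced by a sharper evaluation of $\iint_{B_{\rho/n}^{2}} |x-y|^{-(2-\mu_{1})}|x|^{-\alpha}|y|^{-\alpha}\,dxdy$ (via polar coordinates or Doubly Weighted Hardy-Littlewood-Sobolev) to extract the factor $\mu_{1}(1+\mu_{1})(2+\mu_{1})$, and one must carefully track how $\|w_{n}\|^{2} = 1 + V_{\rho}\delta_{n} + o(1/\log n)$ shifts $t_{n}^{2}$ from $C$ by an $O(1/\log n)$ amount, which exponentiates through $n^{\gamma_{0}t_{n}^{2}/\pi}$ into the $O(1)$ factor $e^{-V_{\rho}\rho^{2}(2-2\alpha+\mu_{1})/4}$ appearing in $K(\rho)$; the residual $e^{-1}$ in $K(\rho)$ comes from the precise value of $c_{n}^{2} = (2-2\alpha+\mu_{1})\log n/(2\gamma_{0})$ appearing in the $1/c_{n}^{2}$ prefactor. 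The perturbation $h_{n}$ is absorbed since $\|h_{n}\| \to 0$ while $t_{n}w_{n}|_{B_{\rho/n}} \to \infty$.
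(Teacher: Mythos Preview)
Your contradiction strategy is different from the paper's direct case analysis in $t$, and in principle could work, but there is a genuine gap at the very first step. You bound $\mathcal{F}(u_{n})$ from below by applying $(H_{4}')$ to $u_{n}=t_{n}w_{n}+h_{n}$ on $B_{\rho/n}$. This requires $t_{n}w_{n}+h_{n}$ to be large there, but at this stage you have no pointwise control of $h_{n}$: the only information is $\|h_{n}\|^{2}\le t_{n}^{2}\|w_{n}\|^{2}-C$, and in two dimensions an $H^{1}_{V}$ bound gives no $L^{\infty}$ control on the shrinking ball $B_{\rho/n}$. Nothing prevents $h_{n}$ from cancelling $t_{n}w_{n}$ on a set of positive measure inside $B_{\rho/n}$, so your lower bound on $\mathcal{F}(u_{n})$ alone is unjustified, and with it the conclusions $t_{n}$ bounded, $t_{\ast}^{2}=C$, and $\|h_{n}\|\to 0$ all collapse. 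The later remark that ``$h_{n}$ is absorbed since $\|h_{n}\|\to 0$'' is circular for the same reason.

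The paper avoids this by never estimating $\mathcal{F}$ and $\mathcal{G}$ separately. It bounds the \emph{sum} $\tfrac{1}{2}\mathcal{F}(tw_{n}+v)+\tfrac{1}{2}\mathcal{G}(tw_{n}-v)$ from below via the elementary inequality $Ae^{x}+Be^{y}\ge 2\sqrt{AB}\,e^{(x+y)/2}$ applied inside the double integral; since $(tw_{n}+v)^{2}+(tw_{n}-v)^{2}=2t^{2}w_{n}^{2}+2v^{2}\ge 2t^{2}w_{n}^{2}$, the resulting lower bound depends only on $t$, not on $v$. With $v$ eliminated, the paper reduces to a one-variable problem and splits $[0,\infty)$ into four $t$-intervals, maximising an explicit scalar function $\varphi_{n}(t)$ in each. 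The sharp constant $C_{\mu_{1}}=\tfrac{4\pi^{2}}{\mu_{1}(1+\mu_{1})(2+\mu_{1})}$ you anticipate does appear, coming from the exact computation $\iint_{B_{\rho/n}^{2}}|x|^{-\alpha}|y|^{-\alpha}|x-y|^{-(2-\mu_{1})}\,dx\,dy\ge C_{\mu_{1}}(\rho/n)^{2-2\alpha+\mu_{1}}$. If you insert this AM-GM step at the outset, your contradiction route becomes viable; without it, the argument does not get off the ground.
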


\begin{proof}
First, we give the following estimate in $B_{\rho/n}(0)$,
\begin{eqnarray*}
    \begin{aligned}\displaystyle
\int\limits_{B_{\rho/n}(0)}\frac{1}{|y|^{\alpha}}dy\int\limits_{B_{\rho/n}(0)}\frac{dx}{|x|^{\alpha}|x-y|^{2-\mu_{1}}}&\geq (\frac{\rho}{n})^{-2\alpha}\int\limits_{B_{\rho/n}(0)}dx\int\limits_{B_{\rho/n}(0)}\frac{dy}{|x-y|^{2-\mu_{1}}}\\
&\geq (\frac{\rho}{n})^{-2\alpha}\int\limits_{B_{\rho/n}(0)}dx\int\limits_{B_{\rho/n}(x)}\frac{dz}{|z|^{2-\mu_{1}}}\\
&\geq (\frac{\rho}{n})^{-2\alpha} \int\limits_{B_{\rho/n}(0)}dx\int\limits_{B_{\rho/n}(0)-|x|}\frac{dz}{|z|^{2-\mu_{1}}}\\
& =(\frac{\rho}{n})^{-2\alpha}\frac{2\pi}{\mu_{1}}\int\limits_{B_{\rho/n}(0)}(\frac{\rho}{n}-|x|)^{\mu_{1}}\\
& =\frac{4\pi^{2}}{\mu_{1}(1+\mu_{1})(2+\mu_{1})}(\frac{\rho}{n})^{2-2\alpha+\mu_{1}}\\
& =C_{\mu_{1}}(\frac{\rho}{n})^{2-2\alpha+\mu_{1}},
    \end{aligned}
\end{eqnarray*}
where
\begin{eqnarray*}
    \begin{aligned}\displaystyle
    C_{\mu_{1}}=\frac{4\pi^{2}}{\mu_{1}(1+\mu_{1})(2+\mu_{1})}.
    \end{aligned}
\end{eqnarray*}

By $(V_{1}),(V_{2})$ we know that V is bounded locally. Thus we let $V_{\rho}=\max\limits_{{x\in [0,\rho]}}{V(x)}\leq C$. Then by $(H_{4}^{\prime})$, we may choose $\varepsilon>0$ small such that
\begin{equation}\label{import}
    \begin{aligned}\displaystyle
    \log \frac{2A_{\mu_{1}}C_{\mu_{1}}\rho^{Q}(\kappa-\varepsilon)^{2}\gamma_{0}^{2}}{\pi Q (1+\varepsilon)(1+\varepsilon V_{\rho}\rho^{2})}>\frac{V_{\rho}\rho^{2}Q}{4}-1.
    \end{aligned}
\end{equation}
 And using $(\ref{ESTa})$, we have
\begin{eqnarray*}
    \begin{aligned}\displaystyle
\Phi(tw_{n}+v,tw_{n}-v)
    &=t^{2}\|w_{n}\|^{2}-\|v\|^{2}-\Psi(tw_{n}+v,tw_{n}-v)\\
    &\leq t^{2}(\|\nabla w_{n}\|^{2}_{2}+V_{\rho}\|w_{n}\|^{2}_{2})-\Psi(tw_{n}+v,tw_{n}-v)\\
    &\leq (1+V_{\rho}\delta_{n})t^{2}-\Psi(tw_{n}+v,tw_{n}-v),\ \ \ \forall \ t\geq 0.
    \end{aligned}
\end{eqnarray*}
where $\Psi(tw_{n}+v,tw_{n}-v)=\frac{1}{2}\int_{\mathbb{R}^{2}}\Big(I_{\mu_{1}}\ast \frac{F(tw_{n}+v)}{|x|^{\alpha}}\Big)\frac{F(tw_{n}+v)}{|x|^{\beta}}dx+\frac{1}{2}\int_{\mathbb{R}^{2}}\Big(I_{\mu_{1}}\ast \frac{G(tw_{n}-v)}{|x|^{\alpha}}\Big)\frac{G(tw_{n}-v)}{|x|^{\alpha}}dx.$\\
Then let $Q=2-2\alpha+\mu_{1}$, there are four cases to distinguish.

Case $i).$ $t\in[0,\sqrt{\frac{Q\pi}{2\gamma_{0}}}]$. Then by $F,G\geq0$, we have
\begin{eqnarray*}
    \begin{aligned}\displaystyle
    \Phi(tw_{n}+v,tw_{n}-v)
    \leq(1+\frac{V_{\rho}\rho^{2}}{4\log n})t^{2}-\Psi(tw_{n}+v,tw_{n}-v)
    \leq \frac{Q\pi}{2\gamma_{0}}+O(\frac{1}{\log n}),
    \end{aligned}
\end{eqnarray*}
which implies that $(\ref{ESTc})$ hold.

Case $ii).$ $t\in[\sqrt{\frac{Q\pi}{2\gamma_{0}}},\sqrt{\frac{Q\pi}{\gamma_{0}}}]$. In this case, $tw_{n}\geq R_{\varepsilon}$ for $x\in B_{\rho/n}(0)$ and $n\in \mathbb{N}$ large. Then using $(3.1)$, we have\\
\begin{eqnarray*}
    \begin{aligned}\displaystyle
    F(tw_{n}+v)\geq\frac{(\kappa-\varepsilon)e^{\gamma_{0}(tw_{n}+v)^{2}}}{tw_{n}+v} \ \ \ \mbox{and} \ \ \ \ G(tw_{n}-v)\geq\frac{(\kappa-\varepsilon)e^{\gamma_{0}(tw_{n}-v)^{2}}}{tw_{n}-v},
    \end{aligned}
\end{eqnarray*}
which, together with $(\ref{ESTb})$, yields
\begin{eqnarray*}
    \begin{aligned}\displaystyle
    &\Psi(tw_{n}+v,tw_{n}-v)\\
=&\frac{1}{2}\int_{\mathbb{R}^{2}}\Big(I_{\mu_{1}}\ast \frac{F(tw_{n}+v)}{|x|^{\alpha}}\Big)\frac{F(tw_{n}+v)}{|x|^{\alpha}}dx+\frac{1}{2}\int_{\mathbb{R}^{2}}\Big(I_{\mu_{1}}\ast \frac{G(tw_{n}-v)}{|x|^{\alpha}}\Big)\frac{G(tw_{n}-v)}{|x|^{\alpha}}dx\\
\geq& \frac{A_{\mu_{1}}}{2}\int_{B_{\rho/n}}\int_{B_{\rho/n}}\frac{F(tw_{n}+v(x))\cdot F(tw_{n}+v(y))}{|x|^{\alpha}|x-y|^{2-\mu_{1}}|y|^{\alpha}}dxdy\\
    &\quad+\frac{A_{\mu_{1}}}{2}\int_{B_{\rho/n}}\int_{B_{\rho/n}}\frac{G(tw_{n}-v(x))\cdot G(tw_{n}-v(y))}{|x|^{\alpha}|x-y|^{2-\mu_{1}}|y|^{\alpha}}dxdy.
    \end{aligned}
\end{eqnarray*}
Next by the inequality $A(x)e^{x}+B(y)e^{y}\geq 2\sqrt{A(x)B(y)}e^{\frac{x+y}{2}}$, we obtain
\begin{eqnarray*}
    \begin{aligned}\displaystyle
    \Psi(tw_{n}+v,tw_{n}-v)
\geq & \frac{A_{\mu_{1}}(\kappa-\varepsilon)^{2}}{t^{2}w_{n}^{2}}\int_{B_{\rho/n}}\int_{B_{\rho/n}}\frac{e^{2\gamma_{0}t^{2}w_{n}^{2}+v^{2}(x)+v^{2}(y)}}{|x|^{\alpha}|x-y|^{2-\mu_{1}}|y|^{\alpha}}dxdy\\
\geq &\frac{2A_{\mu_{1}}(\kappa-\varepsilon)^{2}\gamma_{0}}{Q\log n}e^{2\gamma_{0}t^{2}w_{n}^{2}}\int_{B_{\rho/n}}\int_{B_{\rho/n}}\frac{1}{|x|^{\alpha}|x-y|^{2-\mu_{1}}|y|^{\alpha}}dxdy\\
\geq & \frac{2A_{\mu_{1}}(\kappa-\varepsilon)\gamma_{0}C_{\mu_{1}}}{Q\log n}\Big(\frac{\rho}{n}\Big)^{2-2\alpha+\mu_{1}}e^{2\gamma_{0}t^{2}w_{n}^{2}}\\
\geq &\frac{2A_{\mu_{1}}C_{\mu_{1}}\rho^{Q}(\kappa-\varepsilon)^{2}\gamma_{0}}{Q n^{Q}\log n}e^{(\pi)^{-1}\gamma_{0}t^{2}\log n}.
    \end{aligned}
\end{eqnarray*}
Then we obtain that
\begin{eqnarray*}
    \begin{aligned}\displaystyle
    \Phi(tw_{n}+v,tw_{n}-v)
     \leq (1+\frac{V_{\rho}\rho^{2}}{4\log n})t^{2} -\frac{2A_{\mu_{1}}C_{\mu_{1}}\rho^{Q}(\kappa-\varepsilon)^{2}\gamma_{0}}{Q n^{Q}\log n}e^{(\pi)^{-1}\gamma_{0}t^{2}\log n}:=\varphi_{n}(t).
    \end{aligned}
\end{eqnarray*}
Choosing $t_{n}>0$ satisfying $\varphi_{n}^{\prime}(t_{n})=0$, then we obtain
\begin{eqnarray*}
    \begin{aligned}\displaystyle
    1+\frac{V_{\rho}\rho^{2}}{4\log n}=\frac{2A_{\mu_{1}}C_{\mu_{1}}\rho^{Q}(\kappa-\varepsilon)^{2}\gamma_{0}^{2}}{ Q \pi n^{Q}}e^{(\pi)^{-1}\gamma_{0}t^{2}\log n}.
    \end{aligned}
\end{eqnarray*}
It follows that
\begin{equation}\label{ESTd}
    \begin{aligned}\displaystyle
    t_{n}^{2}&=\frac{Q\pi}{\gamma_{0}}\Big[1+\frac{\log(1+\frac{V_{\rho}\rho^{2}}{4\log n})-\log\frac{2A_{\mu_{1}}C_{\mu_{1}}\rho^{Q}(\kappa-\varepsilon)^{2}\gamma_{0}^{2}}{\pi Q}}{Q\log n}\Big]\\
    &\leq \frac{Q\pi}{\gamma_{0}}-\frac{\pi}{\gamma_{0}\log n}\log \frac{2A_{\mu_{1}}C_{\mu_{1}}\rho^{Q}(\kappa-\varepsilon)^{2}\gamma_{0}^{2}}{\pi Q  (1+\varepsilon V_{\rho}\rho^{2})},
    \end{aligned}
\end{equation}
and
\begin{eqnarray*}
    \begin{aligned}\displaystyle
    \varphi_{n}(t)\leq \varphi_{n}(t_{n})=(1+\frac{V_{\rho}\rho^{2}}{4\log n})t_{n}^{2} -\frac{\pi }{\gamma_{0}\log n}(1+\frac{V_{\rho}\rho^{2}}{4\log n}), \ \ \forall \ t\geq 0.
    \end{aligned}
\end{eqnarray*}
Using $(\ref{ESTd})$, we get
\begin{eqnarray*}
    \begin{aligned}\displaystyle
    \varphi_{n}(t)&\leq (1+\frac{V_{\rho}\rho^{2}}{4\log n})t_{n}^{2}-\frac{\pi}{\gamma_{0}\log n}(1+\frac{V_{\rho}\rho^{2}}{4\log n})\\
    &\leq (1+\frac{V_{\rho}\rho^{2}}{4\log n})\Big[\frac{Q\pi}{\gamma_{0}}-\frac{\pi}{\gamma_{0}\log n}\log \frac{2A_{\mu_{1}}C_{\mu_{1}}\rho^{Q}(\kappa-\varepsilon)^{2}\gamma_{0}}{\pi Q  (1+\varepsilon V_{\rho}\rho^{2})}\Big]-\frac{\pi}{\gamma_{0}\log n}+O(\frac{1}{\log^{2} n})\\
    &\leq \frac{Q\pi}{\gamma_{0}}+\frac{V_{\rho}\rho^{2}\pi Q}{4\gamma_{0}\log n}-\frac{\pi}{\gamma_{0}\log n}-\frac{\pi}{\gamma_{0}\log n}\log \frac{2A_{\mu_{1}}C_{\mu_{1}}\rho^{Q}(\kappa-\varepsilon)^{2}\gamma_{0}^{2}}{\pi Q  (1+\varepsilon V_{\rho}\rho^{2})}+O(\frac{1}{\log^{2} n}).
    \end{aligned}
\end{eqnarray*}
Hence, we obtain
\begin{eqnarray*}
    \begin{aligned}\displaystyle
    \Phi(tw_{n}+v,tw_{n}-v)\leq \frac{Q\pi}{\gamma_{0}}+\frac{\pi}{\gamma_{0}\log n}\Big[\frac{V_{\rho}\rho^{2}Q}{4}-1-\log \frac{2A_{\mu_{1}}C_{\mu_{1}}\rho^{Q}(\kappa-\varepsilon)^{2}\gamma_{0}^{2}}{\pi Q  (1+\varepsilon V_{\rho}\rho^{2})}\Big]+O(\frac{1}{\log^{2} n}),
    \end{aligned}
\end{eqnarray*}
which together with $(\ref{import})$ imply that $(\ref{ESTc})$ hold.

Case $iii).$ $t\in\Big[\sqrt{\frac{Q\pi}{\gamma_{0}}},\sqrt{\frac{Q\pi}{\gamma_{0}}(1+\varepsilon)}\Big]$. In this case, $tw_{n}\geq R_{\varepsilon}$ for $x\in B_{\rho/n}(0)$ and $n\in \mathbb{N}$ large. Then using $(\ref{ESTb})$, we have
\begin{eqnarray*}
    \begin{aligned}\displaystyle
    \Psi(tw_{n}+v,tw_{n}-v)
    &\geq \frac{A_{\mu_{1}}(\kappa-\varepsilon)^{2}}{t^{2}w_{n}^{2}}\int_{B_{\rho/n}}\int_{B_{\rho/n}}\frac{e^{2\gamma_{0}t^{2}w_{n}^{2}+v^{2}(x)+v^{2}(y)}}{|x|^{\alpha}|x-y|^{2-\mu_{1}}|y|^{\alpha}}dxdy\\
    &\geq \frac{2A_{\mu_{1}}C_{\mu_{1}}\rho^{Q}(\kappa-\varepsilon)^{2}\gamma_{0}}{(1+\varepsilon)Q n^{Q}\log n}e^{(\pi)^{-1}\gamma_{0}t^{2}\log n}.
    \end{aligned}
\end{eqnarray*}
Then
\begin{eqnarray*}
    \begin{aligned}\displaystyle
    \Phi(tw_{n}+v,tw_{n}-v)
    &\leq (1+\frac{V_{\rho}\rho^{2}}{4\log n})t^{2} -\frac{2A_{\mu_{1}}C_{\mu_{1}}\rho^{Q}(\kappa-\varepsilon)^{2}\gamma_{0}}{(1+\varepsilon)Q n^{Q}\log n}e^{(\pi)^{-1}\gamma_{0}t^{2}\log n}:=\psi_{n}(t).
    \end{aligned}
\end{eqnarray*}
Choosing $\widetilde{t}_{n}>0$ satisfying $\psi_{n}^{\prime}(\widetilde{t}_{n})=0$, then we obtain
\begin{eqnarray*}
    \begin{aligned}\displaystyle
    1+\frac{V_{\rho}\rho^{2}}{4\log n}=\frac{2A_{\mu_{1}}C_{\mu_{1}}\rho^{Q}(\kappa-\varepsilon)^{2}\gamma_{0}^{2}}{(1+\varepsilon)\pi Q n^{Q}}e^{(\pi)^{-1}\gamma_{0}t^{2}\log n},
    \end{aligned}
\end{eqnarray*}
It follows that
\begin{equation}\label{ESTe}
    \begin{aligned}\displaystyle
    \widetilde{t}_{n}^{2}&=\frac{Q\pi}{\gamma_{0}}\Big[1+\frac{\log(1+\frac{V_{\rho}\rho^{2}}{4\log n})-\frac{2A_{\mu_{1}}C_{\mu_{1}}\rho^{Q}(\kappa-\varepsilon)^{2}\gamma_{0}^{2}}{(1+\varepsilon)\pi Q }}{Q\log n}\Big]\\
    &\leq \frac{Q\pi}{\gamma_{0}}+\frac{\pi}{\gamma_{0}\log n}\log \frac{\pi Q (1+\varepsilon)(1+\varepsilon V_{\rho}\rho^{2})}{2A_{\mu_{1}}C_{\mu_{1}}\rho^{Q}(\kappa-\varepsilon)^{2}\gamma_{0}^{2}},
    \end{aligned}
\end{equation}
and
\begin{eqnarray*}
    \begin{aligned}\displaystyle
    \psi_{n}(t)\leq \psi_{n}(\widetilde{t}_{n})=(1+\frac{V_{\rho}\rho^{2}}{4\log n})\widetilde{t}_{n}^{2} -\frac{\pi }{\gamma_{0}\log n}(1+\frac{V_{\rho}\rho^{2}}{4\log n}), \ \ \forall \ t \ \geq \ 0.
    \end{aligned}
\end{eqnarray*}
Using $(\ref{ESTe})$, we get
\begin{eqnarray*}
    \begin{aligned}\displaystyle
    \psi_{n}(t)&\leq (1+\frac{V_{\rho}\rho^{2}}{4\log n})\widetilde{t}_{n}^{2}-\frac{\pi }{\gamma_{0}\log n}(1+\frac{V_{\rho}\rho^{2}}{4\log n})\\
    &\leq (1+\frac{V_{\rho}\rho^{2}}{4\log n})\Big[\frac{Q\pi}{\gamma_{0}}+\frac{\pi}{\gamma_{0}\log n}\log \frac{\pi Q (1+\varepsilon)(1+\varepsilon V_{\rho}\rho^{2})}{2A_{\mu_{1}}C_{\mu_{1}}\rho^{Q}(\kappa-\varepsilon)^{2}\gamma_{0}^{2}}\Big]-\frac{\pi }{\gamma_{0}\log n}+O(\frac{1}{\log^{2} n})\\
    &\leq \frac{Q\pi}{\gamma_{0}}+\frac{\pi}{\gamma_{0}\log n}\Big[ \frac{V_{\rho}\rho^{2}Q}{4}-1-\log \frac{2A_{\mu_{1}}C_{\mu_{1}}\rho^{Q}(\kappa-\varepsilon)^{2}\gamma_{0}^{2}}{\pi Q (1+\varepsilon)(1+\varepsilon V_{\rho}\rho^{2})}\Big]+O(\frac{1}{\log^{2} n}).
    \end{aligned}
\end{eqnarray*}
Hence, we obtain
\begin{eqnarray*}
    \begin{aligned}\displaystyle
    \Phi(tw_{n}+v,tw_{n}-v)\leq \frac{Q\pi}{\gamma_{0}}+\frac{\pi}{\gamma_{0}\log n}\Big[ \frac{V_{\rho}\rho^{2}Q}{4}-1-\log \frac{2A_{\mu_{1}}C_{\mu_{1}}\rho^{Q}(\kappa-\varepsilon)^{2}\gamma_{0}^{2}}{\pi Q (1+\varepsilon)(1+\varepsilon V_{\rho}\rho^{2})}\Big]+O(\frac{1}{\log^{2} n}),
    \end{aligned}
\end{eqnarray*}
which together with $(\ref{import})$ imply that $(\ref{ESTc})$ hold.

Case $iv).$ $t\in\Big(\sqrt{\frac{Q\pi}{\gamma_{0}}(1+\varepsilon)},+\infty\Big)$. In this case, $tw_{n}\geq R_{\varepsilon}$ for $x\in B_{\rho/n}(0)$ and $n\in \mathbb{N}$ large. Then using $(\ref{ESTb})$, we have
\begin{eqnarray*}
    \begin{aligned}\displaystyle
    \Phi(tw_{n}+v,tw_{n}-v)
    &\leq(1+\frac{V_{\rho}\rho^{2}}{4\log n})t^{2}-\Psi(tw_{n}+v,tw_{n}-v)\\
    &\leq(1+\frac{V_{\rho}\rho^{2}}{4\log n})t^{2}-\frac{2\pi A_{\mu_{1}} C_{\mu_{1}} \rho^{Q} (\kappa-\varepsilon)^{2}}{  t^{2} n^{Q}\log n}e^{(\pi)^{-1}\gamma_{0}t^{2}\log n}\\
    &\leq (1+\frac{V_{\rho}\rho^{2}}{4\log n})\frac{Q\pi(1+\varepsilon)}{\gamma_{0}}-\frac{2 A_{\mu_{1}} C_{\mu_{1}} \rho^{Q} (\kappa-\varepsilon)^{2}}{  (1+\varepsilon)^{2} Q\log n}e^{Q\varepsilon \log n}\\
    &< \frac{Q\pi}{\gamma_{0}},
    \end{aligned}
\end{eqnarray*}
where we have used the fact that the function
\begin{eqnarray*}
    \begin{aligned}\displaystyle
\phi(t)=(1+\frac{V_{\rho}\rho^{2}}{4\log n})t^{2}-\frac{2\pi A_{\mu_{1}} C_{\mu_{1}} \rho^{Q} (\kappa-\varepsilon)^{2}}{  t^{2} n^{Q}\log n}e^{(\pi)^{-1}\gamma_{0}t^{2}\log n}
    \end{aligned}
\end{eqnarray*}
is decreasing on $t\in\Big(\sqrt{\frac{Q\pi}{\gamma_{0}}(1+\varepsilon)},+\infty\Big)$. Thus $(\ref{ESTc})$ holds for $n\geq n_{0}$.\\
Then the proof is complete.
\end{proof}

\section{{\bfseries Proof of Theorem 1.2.}}\label{PROOF}

In this section we assume that $f$ and $g$ have critical growth with exponent critical $\gamma_{0}$, $\alpha=\beta$ and $\mu_{1}=\mu_{2}$.\\
{\bf Proof of Theorem} $\ref{THb}$. The Lemma $\ref{LEfa}$ implies that there is $\delta>0$ such that
\begin{eqnarray*}
    \begin{aligned}\displaystyle
    c_{n}:=c_{n,y}\leq \frac{Q\pi}{\gamma_{0}}-\delta.
\end{aligned}
\end{eqnarray*}
Applying Proposition $\ref{PRda}$ we have a sequence $z_{n}:=z_{n,y}=(u_{n},v_{n})\in H_{n,y}$ such that
\begin{equation}\label{PRCa}
    \begin{aligned}\displaystyle
    &\|(u_{n},v_{n})\|_{E}\leq C,\\
    &\Phi_{n,y}(u_{n},v_{n})=c_{n}:=c_{n,y}\in [\sigma,\frac{Q\pi}{\gamma_{0}}-\delta),\\
    &(\Phi_{n,y})^{\prime}(u_{n},v_{n})=0,\\
    &(u_{n},v_{n})\rightharpoonup (u_{0},v_{0}) \ \ \mbox{in} \ \ E.
    \end{aligned}
\end{equation}

By $(u_{n},v_{n})$ is bounded in $E$, $\mu_{1}=\mu_{2}$ and $\alpha=\beta$, there exists $(u_{0},v_{0})$ such that $(u_{n},v_{n})\rightharpoonup (u_{0},v_{0})$.
Then Taking $(0,\psi)$ and $(\varphi,0)$ as text function in $(\ref{PRCa})$, we get
\begin{eqnarray*}
    \begin{aligned}\displaystyle
    &\int_{\mathbb{R}^{2}}\nabla u_{n}\nabla \psi +V(x)u_{n}\psi dx = \int_{\mathbb{R}^{2}}\Big(I_{\mu_{1}}\ast \frac{G(v_{n})}{|x|^{\alpha}}\Big)\frac{g(v_{n})\psi}{|x|^{\alpha}}dx \ \ \forall \psi \in E_{n}, \\
    &\int_{\mathbb{R}^{2}}\nabla v_{n}\nabla \varphi +V(x)v_{n}\varphi dx = \int_{\mathbb{R}^{2}}\Big(I_{\mu_{1}}\ast \frac{F(u_{n})}{|x|^{\alpha}}\Big)\frac{f(u_{n})\varphi}{|x|^{\alpha}}dx \ \ \forall \varphi \in E_{n},
    \end{aligned}
\end{eqnarray*}
where $E_{n}:=span\{\phi_{i}|i=1,...,n\}$. By Lemma $\ref{LEcb}$, we have
\begin{eqnarray*}
    \begin{aligned}\displaystyle
    &\lim\limits_{n\rightarrow\infty}\int_{\mathbb{R}^{2}}\Big(I_{\mu_{1}}\ast \frac{F(u_{n})}{|x|^{\alpha}}\Big)\frac{f(u_{n})\varphi}{|x|^{\alpha}}dx=\int_{\mathbb{R}^{2}}\Big(I_{\mu_{1}}\ast \frac{F(u_{0})}{|x|^{\alpha}}\Big)\frac{f(u_{0})\varphi}{|x|^{\alpha}}dx,\\
    &\lim\limits_{n\rightarrow\infty}\int_{\mathbb{R}^{2}}\Big(I_{\mu_{1}}\ast \frac{G(v_{n})}{|x|^{\alpha}}\Big)\frac{g(v_{n})\psi}{|x|^{\alpha}}dx=\int_{\mathbb{R}^{2}}\Big(I_{\mu_{1}}\ast \frac{G(v_{0})}{|x|^{\alpha}}\Big)\frac{g(v_{0})\psi}{|x|^{\alpha}}dx.
    \end{aligned}
\end{eqnarray*}
Thus, using the fact that $\cup_{n\in \mathbb{N}}E_{n}$ is dense in $E$, together with Lemma $\ref{LEcc}$, we can obtain that
\begin{equation}\label{PRCb}
    \begin{aligned}\displaystyle
    &\int_{\mathbb{R}^{2}}\nabla u_{0}\nabla \psi +V(x)u_{0}\psi dx = \int_{\mathbb{R}^{2}}\Big(I_{\mu_{1}}\ast \frac{G(v_{0})}{|x|^{\alpha}}\Big)\frac{g(v_{0})\psi}{|x|^{\alpha}}dx \ \ \forall \psi \in E, \\
    &\int_{\mathbb{R}^{2}}\nabla v_{0}\nabla \varphi +V(x)v_{0}\varphi dx = \int_{\mathbb{R}^{2}}\Big(I_{\mu_{1}}\ast \frac{F(u_{0})}{|x|^{\alpha}}\Big)\frac{f(u_{0})\varphi}{|x|^{\alpha}}dx \ \ \forall \varphi \in E.
    \end{aligned}
\end{equation}
Therefore, we conclude that $\Phi^{\prime}(u_{0},v_{0})=0$ in E, then $(u_{0},v_{0})$ is a critical point of $\Phi$. Then for $\forall \psi, \varphi \in C^{\infty}_{0}(\mathbb{R}^{2})$, we know that
\begin{equation}\label{PRCc}
    \begin{aligned}\displaystyle
    \langle u_{0}, \psi\rangle + \langle v_{0}, \varphi\rangle = &\int_{\mathbb{R}^{2}}\Big(I_{\mu_{1}}\ast \frac{F(u_{0})}{|x|^{\alpha}}\Big)\frac{f(u_{0})\varphi}{|x|^{\alpha}}dx+\int_{\mathbb{R}^{2}}\Big(I_{\mu_{1}}\ast \frac{G(v_{0})}{|x|^{\alpha}}\Big)\frac{g(v_{0})\psi}{|x|^{\alpha}}dx.
    \end{aligned}
\end{equation}

Now, it remains to prove that $u_{0}$, $v_{0}\neq 0$. Assume that $u_{0}\equiv 0$, then by $(\ref{PRCc})$ we know that $v_{0}=0$. Thus the proof of this theorem is divided into two steps.

Step $1.$ $(u_{n},v_{n})\rightarrow (u_{0},v_{0})=(0,0)$, that is, $\|u_{n}\|,\|v_{n}\|\rightarrow 0$. By the Cauchy-Schwarz inequality, we obtain
\begin{eqnarray*}
    \begin{aligned}\displaystyle
    \lim\limits_{n\rightarrow \infty}\int_{\mathbb{R}^{2}}(\nabla u_{n}\nabla v_{n}+ V(x)u_{n}v_{n})\rightarrow 0.
    \end{aligned}
\end{eqnarray*}
Which implies that
\begin{eqnarray*}
    \begin{aligned}\displaystyle
    \int_{\mathbb{R}^{2}}\Big(I_{\mu_{1}}\ast \frac{F(u_{n})}{|x|^{\alpha}}\Big)\frac{f(u_{n})u_{n}}{|x|^{\alpha}}dx\rightarrow 0 \ and \ \int_{\mathbb{R}^{2}}\Big(I_{\mu_{1}}\ast \frac{G(v_{n})}{|x|^{\alpha}}\Big)\frac{g(v_{n})v_{n}}{|x|^{\alpha}}dx\rightarrow 0,
    \end{aligned}
\end{eqnarray*}
together with $(H_{3})$ we get that
\begin{eqnarray*}
    \begin{aligned}\displaystyle
    \int_{\mathbb{R}^{2}}\Big(I_{\mu_{1}}\ast \frac{F(u_{n})}{|x|^{\alpha}}\Big)\frac{F(u_{n})}{|x|^{\alpha}}dx\rightarrow 0 \ and \ \int_{\mathbb{R}^{2}}\Big(I_{\mu_{1}}\ast \frac{G(v_{n})}{|x|^{\alpha}}\Big)\frac{G(v_{n})}{|x|^{\alpha}}dx\rightarrow 0.
    \end{aligned}
\end{eqnarray*}

Then, the last convergence shows us that $c_{n}=0$, a contradiction. Hence, this case cannot occur.

Step $2.$ $(u_{n},v_{n})$ converges weakly to $(u_{0},v_{0})$ in $E$ but does not converge strongly. In other words, $(u_{n},v_{n})\rightharpoonup (0,0)$ in $E$, and there is a constant $a>0$ such that $\liminf\limits_{n\rightarrow\infty}\|u_{n}\|\geq a$ and $\liminf\limits_{n\rightarrow\infty}\|v_{n}\|\geq a$.

Taking $(0,u_{n}),(v_{n},0)$ as test function in $(\ref{PRCa})$, we obtain that
\begin{eqnarray*}
    \begin{aligned}\displaystyle
    \|u_{n}\|^{2}=\int_{\mathbb{R}^{2}}\Big(I_{\mu_{1}}\ast \frac{G(v_{n})}{|x|^{\alpha}}\Big)\frac{g(v_{n})u_{n}}{|x|^{\alpha}}dx,\quad
    \mbox{and}\ \ \|v_{n}\|^{2}=\int_{\mathbb{R}^{2}}\Big(I_{\mu_{1}}\ast \frac{F(u_{n})}{|x|^{\alpha}}\Big)\frac{f(u_{n})v_{n}}{|x|^{\alpha}}dx.
    \end{aligned}
\end{eqnarray*}

For $\delta > 0$ sufficiently small, we set
\begin{eqnarray*}
    \begin{aligned}\displaystyle
    \overline{u_{n}}=(\frac{Q\pi}{\gamma_{0}}-\delta)^{\frac{1}{2}}\frac{u_{n}}{\|u_{n}\|},\quad \ \ \overline{v_{n}}=(\frac{Q\pi}{\gamma_{0}}-\delta)^{\frac{1}{2}}\frac{v_{n}}{\|v_{n}\|},
    \end{aligned}
\end{eqnarray*}
thus we have
\begin{eqnarray*}
    \begin{aligned}\displaystyle
    (\frac{Q\pi}{\gamma_{0}}-\delta)^{\frac{1}{2}}\|u_{n}\|=\int_{\mathbb{R}^{2}}\Big(I_{\mu}\ast \frac{G(v_{n})}{|x|^{\alpha}}\Big)\frac{g(v_{n})\overline{u_{n}}}{|x|^{\alpha}}dx,
    \end{aligned}
\end{eqnarray*}
and
\begin{eqnarray*}
    \begin{aligned}\displaystyle
(\frac{Q\pi}{\gamma_{0}}-\delta)^{\frac{1}{2}}\|v_{n}\|=\int_{\mathbb{R}^{2}}\Big(I_{\mu_{1}}\ast \frac{F(u_{n})}{|x|^{\alpha}}\Big)\frac{f(u_{n})\overline{v_{n}}}{|x|^{\alpha}}dx.
    \end{aligned}
\end{eqnarray*}

Following the same steps used in \cite{MMA}, we are able to obtain that
\begin{eqnarray*}
    \begin{aligned}\displaystyle
    (\frac{Q\pi}{\gamma_{0}}-\delta)^{\frac{1}{2}}\|u_{n}\|\leq (1+\frac{\varepsilon}{\gamma_{0}})^{\frac{1}{2}}\int_{\mathbb{R}^{2}}\Big(I_{\mu_{1}}\ast \frac{G(v_{n})}{|x|^{\alpha}}\Big)\frac{g(v_{n})v_{n}}{|x|^{\alpha}}dx+o(1),\\
    (\frac{Q\pi}{\gamma_{0}}-\delta)^{\frac{1}{2}}\|v_{n}\|\leq (1+\frac{\varepsilon}{\gamma_{0}})^{\frac{1}{2}}\int_{\mathbb{R}^{2}}\Big(I_{\mu_{1}}\ast \frac{F(u_{n})}{|x|^{\alpha}}\Big)\frac{f(u_{n})u_{n}}{|x|^{\alpha}}dx+o(1).
    \end{aligned}
\end{eqnarray*}

Next we claim that
\begin{eqnarray*}
    \begin{aligned}\displaystyle
    \int_{\mathbb{R}^{2}}\Big(I_{\mu_{1}}\ast \frac{F(u_{n})}{|x|^{\alpha}}\Big)\frac{F(u_{n})}{|x|^{\alpha}}dx\rightarrow 0 \ \  \mbox{and}\ \
    \int_{\mathbb{R}^{2}}\Big(I_{\mu_{1}}\ast \frac{G(v_{n})}{|x|^{\alpha}}\Big)\frac{g(v_{n})v_{n}}{|x|^{\alpha}}dx\rightarrow 0,
    \end{aligned}
\end{eqnarray*}
when $(u_{n},v_{n})\rightharpoonup (0,0)$.

From $(\ref{PRCa})$, $c_{n}\leq \frac{Q\pi}{\gamma_{0}}-\delta$ implies that
\begin{eqnarray*}
    \begin{aligned}\displaystyle
    |\int_{\mathbb{R}^{2}}(\nabla u_{n}\nabla v_{n}+V(x)u_{n}v_{n})|\leq \frac{Q\pi}{\gamma_{0}}-\delta +o(1).
    \end{aligned}
\end{eqnarray*}
Then
\begin{eqnarray*}
    \begin{aligned}\displaystyle
    \|u_{n}\|+\|v_{n}\| &\leq (1+\frac{\varepsilon}{\gamma_{0}})^{\frac{1}{2}}\int_{\mathbb{R}^{2}}\Big(I_{\mu_{1}}\ast \frac{G(v_{n})}{|x|^{\alpha}}\Big)\frac{g(v_{n})v_{n}}{|x|^{\alpha}}dx\\
    &+(1+\frac{\varepsilon}{\gamma_{0}})^{\frac{1}{2}}\int_{\mathbb{R}^{2}}\Big(I_{\mu_{1}}\ast \frac{F(u_{n})}{|x|^{\alpha}}\Big)\frac{f(u_{n})u_{n}}{|x|^{\alpha}}dx\\
    &\leq (1+\frac{\varepsilon}{\gamma_{0}})^{\frac{1}{2}}(\frac{Q\pi}{\gamma_{0}}-\delta)^{\frac{1}{2}}\\
    &\leq 2(\frac{Q\pi}{\gamma_{0}}-\delta)^{\frac{1}{2}},
    \end{aligned}
\end{eqnarray*}
for $\varepsilon>0$ sufficiently small and $n$ sufficiently large.\\
Then, without loss of generality, suppose that
\begin{eqnarray*}
    \begin{aligned}\displaystyle
    \|u_{n}\|\leq (\frac{Q\pi}{\gamma_{0}}-\delta)^{\frac{1}{2}}.
    \end{aligned}
\end{eqnarray*}

Using the compact embedding $H^{1}(\mathbb{R}^{2})\hookrightarrow L^{q}(\mathbb{R}^{2})$ for $1\leq q<\infty$, we obtain $u_{n},v_{n}\rightarrow 0$ in $L^{q}(\mathbb{R}^{2})$ for $1\leq q<\infty$.

Taking $t>1$ with $(H_{4}^{\prime})$ such that
\begin{equation}\label{PRCd}
    \begin{aligned}\displaystyle
    (\gamma_{0}+\varepsilon)(\frac{Q\pi}{\gamma_{0}}-\delta)(\frac{4}{2+\mu_{1}-2\alpha})t<4\pi, \end{aligned}
\end{equation}
Then, using the Hardy-Littlewood-Sobolev and $H\ddot{o}lder$ inequalities, we get
\begin{eqnarray*}
    \begin{aligned}\displaystyle
    \int_{\mathbb{R}^{2}}\Big(I_{\mu_{1}}\ast \frac{F(u_{n})}{|x|^{\alpha}}\Big)\frac{f(u_{n})u_{n}}{|x|^{\alpha}}dx\leq C|F(u_{n})|_{\frac{4}{2+\mu_{1}-2\alpha}}|f(u_{n})u_{n}|_{\frac{4}{2+\mu_{1}-2\alpha}},
    \end{aligned}
\end{eqnarray*}
where $C$ is a positive constant. Then together with $(\ref{fnon})$, we obtain
\begin{eqnarray*}
    \begin{aligned}\displaystyle
    |f(s)|\leq\varepsilon |s|+b_{1}|s|^{q-1}(e^{(\gamma_{0}+\varepsilon) s^{2}}-1) \ \ \ \forall \ s \in \mathbb{R}.
    \end{aligned}
\end{eqnarray*}
Then,
\begin{eqnarray*}
    \begin{aligned}\displaystyle
    |f(u_{n})u_{n}|_{\frac{4}{2+\mu_{1}-2\alpha}}\leq\varepsilon |u_{n}|^{2}_{2}+C |u_{n}|^{\frac{2+\mu_{1}-2\alpha}{4t^{\prime}}}_{\frac{4qt^{\prime}}{2+\mu_{1}-2\alpha}}\Big(\int_{\mathbb{R}^{2}}[e^{\frac{4(\gamma_{0}+\varepsilon) t}{2+\mu_{1}-2\alpha}\|u\|^{2}(\frac{u_{n}^{2}}{\|u\|^{2}})}-1]dx\Big)^{\frac{2+\mu_{1}-2\alpha}{4t}},
    \end{aligned}
\end{eqnarray*}
where $t,t^{\prime}>1$ satisfying $\frac{1}{t}+\frac{1}{t^{\prime}}=1$. Together Trudinger-Moser inequality with $(\ref{PRCd})$, we deduce that
\begin{eqnarray*}
    \begin{aligned}\displaystyle
    \Big(\int_{\mathbb{R}^{2}}[e^{\frac{4(\gamma_{0}+\varepsilon) t}{2+\mu_{1}-2\alpha}\|u\|^{2}(\frac{u_{n}^{2}}{\|u\|^{2}})}-1]dx\Big)^{\frac{2+\mu_{1}-2\alpha}{4t}}\leq C_{1},
    \end{aligned}
\end{eqnarray*}
which implies that
\begin{eqnarray*}
    \begin{aligned}\displaystyle
    \int_{\mathbb{R}^{2}}\Big(I_{\mu_{1}}\ast \frac{F(u_{n})}{|x|^{\alpha}}\Big)\frac{f(u_{n})u_{n}}{|x|^{\alpha}}dx\leq \varepsilon^{2} |u_{n}|^{4}_{2}+ C_{2} |u_{n}|^{\frac{2+\mu_{1}-2\alpha}{2t^{\prime}}}_{\frac{4qt^{\prime}}{2+\mu_{1}-2\alpha}}.
    \end{aligned}
\end{eqnarray*}
By compact embedding, we get
\begin{eqnarray*}
    \begin{aligned}\displaystyle
    \int_{\mathbb{R}^{2}}\Big(I_{\mu_{1}}\ast \frac{F(u_{n})}{|x|^{\alpha}}\Big)\frac{f(u_{n})u_{n}}{|x|^{\alpha}}dx \rightarrow 0.
    \end{aligned}
\end{eqnarray*}
And then we conclude that $\|v_{n}\|\rightarrow 0$, repeating the same argument we get $\|u_{n}\|\rightarrow 0$. Hence, by case $1$, this situation cannot occur. Consequently, we obtain $(u_{0},v_{0})\neq 0$. So, $(u_{0},v_{0})$ is a nontrivial weak solution of $(\ref{a})$. Then choosing $\psi=u_{0}^{-}=\{-u_{0},0\}$ and $\varphi=0$ in $(\ref{PRCb})$, we obtain
\begin{eqnarray*}
    \begin{aligned}\displaystyle
    -\|u_{0}^{-}\|^{2}=\int_{\mathbb{R}^{2}}\Big(I_{\mu_{1}}\ast\frac{G(v_{0})}{|x|^{\alpha}}\Big)\frac{g(v_{0})u_{0}^{-}}{|x|^{\alpha}}\geq0,
    \end{aligned}
\end{eqnarray*}
which implies that $u_{0}^{-}=0$. Similarly, we deduce $v_{0}^{-}=0$, thus $u_{0},v_{0}\geq0$. If $u_{0}=0$, it is obvious to obtain $v_{0}=0$, allowing us to conclude that $u_{0},v_{0}>0$. Thus, the proof is complete.
\qed

\smallskip

\noindent{\bfseries Acknowledgements:}
The research has been supported by National Natural Science Foundation of China 11971392, Natural Science
Foundation of Chongqing, China cstc2021ycjh-bgzxm0115.

 \end{document}